\documentclass[a4paper,12pt]{article}
\usepackage{hyperref}
\hypersetup{
    colorlinks=true,
    linktoc=all,
    linkcolor=red,     
    citecolor=blue,     
}
\usepackage{graphicx}
\usepackage{amsmath,amssymb,amsfonts,amsthm,graphics,enumitem}
\usepackage{latexsym, bm}
\usepackage{multicol}
\usepackage{indentfirst}

\usepackage{setspace}
\newtheorem{theorem}{Theorem}[section]
\newtheorem{lemma}{Lemma}[section]

\newtheorem{corollary}{Corollary}[section]

\newtheorem{observation}{Observation}[section]
\newcommand{\ignore}[1]{}
\newtheorem{claimnum}{Claim}

\begin{document}
\noindent
\begin{spacing}{1}

\title {Edge and spectral conditions for rainbow bipancyclicity in bipartite graph collections}
\date{}

\author{
Lihua You\footnote{School of Mathematical Sciences, South China Normal University, Guangzhou, 510631, P. R. China.
E-mail: {\tt ylhua@scnu.edu.cn}.} \;\;   \;\; Xiaoxue Zhang\footnote{Corresponding author. School of Mathematical Sciences, South China Normal University, Guangzhou, 510631, P. R. China.
E-mail: {\tt zhang\_xx1209@163.com}.}\;\;   \;\; Xinghui Zhao\footnote{School of Mathematical Sciences, South China Normal University, Guangzhou, 510631, P. R. China.
E-mail: {\tt xhzhao@m.scnu.edu.cn}.}
}
\maketitle
\begin{abstract}
Let $\mathbf{G}=\{G_1,\dots,G_{2n}\}$ be a collection of not necessarily distinct balanced bipartite graphs of order $2n$ with a common bipartition $V=(X,Y)$. A cycle $C$ with $V(C)\subseteq V$ and $|E(C)|\leq 2n$ is called \emph{rainbow} in $\mathbf{G}$, if there exists an injection $\phi\colon E(C)\to [2n]$ such that $e\in E(G_{\phi(e)})$ for each $e\in E(C)$.
The graph collection $\mathbf{G}$ is said to be \emph{rainbow bipancyclic} if it contains a rainbow cycle of every even length from 4 to $2n$.
In this paper, we show that if $e(G_i)\ge n^2-n+1$ for each $i\in[2n]$, then $\mathbf{G}$ is rainbow bipancyclic unless $G_1=G_2=\dots=G_{2n}\cong B_{n}^1$, where $B_{n}^1$ is the graph obtained from $K_{n,n}$ by deleting all edges in its one subgraph $K_{n-1,1}$. As a corollary, we obtain that if $\rho(G_i)\ge \rho(B_{n}^1)>\sqrt{n(n-1)}$ for each $i\in[2n]$, then $\mathbf{G}$ is rainbow bipancyclic unless $G_1=G_2=\dots=G_{2n}\cong B_{n}^1$, which strengthens Theorem $5$ of [Electron. J. Combin., \textbf{33}(3)(2026), \#P3.29] and Theorem $1.6$ of [Linear Algebra Appl., \textbf{743}(2026), 250--270] from rainbow Hamiltonicity to rainbow bipancyclicity.
We also characterize all bipartite graph collections that are not rainbow bipancyclic under the condition $\rho(G_i)\ge \sqrt{n(n-1)}$ for each $i\in[2n]$.

\noindent
{\bf Keywords:} \ Rainbow cycle; Rainbow bipancyclicity; Bipartite graph collection; Spectral radius

\noindent
{\bf MSC:} 05C38, 05C50
\end{abstract}

\section{Introduction}

\subsection{Hamiltonicity and bipancyclicity}
A graph is Hamiltonian if it contains a cycle covering all its vertices exactly once. The problem of determining whether a graph is Hamiltonian has long attracted considerable research attention. As a special and important family of graphs, bipartite graphs are widely investigated. There have been many results concerning the Hamiltonicity of bipartite graphs. In 1963, Moon and Moser \cite{Moon} gave an edge condition under a minimum degree hypothesis for a balanced bipartite graph to be Hamiltonian. In 2016, Li and Ning \cite{Ning} obtained the spectral analogue of Moon-Moser's theorem. Herein, we present a special version of these results. Let $B_{n}^1$ be the graph obtained from $K_{n,n}$ by deleting all edges in its one subgraph $K_{n-1,1}$. In particular, $B_{1}^1\cong K_{1,1}$ and $B_{2}^1\cong P_4$.

\begin{theorem}[\upshape\cite{Moon}]\label{thm1-1}
Let $n\ge 2$ and $G$ be a balanced bipartite graph of order $2n$. If $e(G)\ge n^2-n+2$, then $G$ is Hamiltonian.
\end{theorem}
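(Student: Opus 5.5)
We plan to derive the statement from the bipartite analogue of the Bondy--Chvátal closure together with a degree-sequence criterion, though a direct extremal argument on a longest path also works. We take the direct route. Write $G=(X,Y)$ with $|X|=|Y|=n$. The complement $\overline{G}$ inside $K_{n,n}$ has at most $n-2$ edges. First we show that $\delta(G)\ge 2$. If some vertex $x\in X$ had degree at most $1$, it would miss at least $n-1$ edges of $K_{n,n}$, contradicting $|E(\overline G)|\le n-2$. For $n=2$ this already forces $G=K_{2,2}=C_4$, so we assume $n\ge 3$.

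The key step is a bipartite Ore-type condition. Take any $x\in X$ and $y\in Y$ with $xy\notin E(G)$. Each vertex of degree $d$ accounts for exactly $n-d$ missing edges, and the edge $xy$ is counted at both ends. Hence
\[
(n-d(x))+(n-d(y))\le |E(\overline G)|+1\le n-1,
\]
which gives $d(x)+d(y)\ge n+1$. We then invoke the standard bipartite Ore theorem (Moon--Moser): if every nonadjacent pair $x\in X$, $y\in Y$ satisfies $d(x)+d(y)\ge n+1$, then $G$ is Hamiltonian.

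Since the paper may want this to be self-contained, we would sketch that theorem as well. Suppose $G$ is a counterexample, and add missing $X$--$Y$ edges one at a time, keeping the degree condition, until we reach a maximal non-Hamiltonian graph $G'$. Then $G'$ contains a nonadjacent pair $x,y$ whose addition creates a Hamiltonian cycle. Removing the edge $xy$ from that cycle leaves a Hamiltonian path $x=v_1,v_2,\dots,v_{2n}=y$. Here $v_{2i-1}\in X$ and $v_{2i}\in Y$.

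Now define two index sets:
\[
S=\{i: xv_{2i}\in E\}, \qquad T=\{i: yv_{2i-1}\in E\}.
\]
Both are subsets of $\{1,\dots,n\}$, with $|S|=d(x)$ and $|T|=d(y)$. If some $i$ lay in both, then
\[
v_1\dots v_{2i-1}\,v_{2n}\dots v_{2i}\,v_1
\]
would be a Hamiltonian cycle. So we may assume $S\cap T=\emptyset$. We also have $n\notin T$, since $v_{2n-1}y$ is a path edge. That index may well lie in $S$, so disjointness does not immediately give a contradiction.

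The main obstacle is this index alignment. The crossing needs $xv_{2i}$ and $yv_{2i-1}$, and the resulting cycle $v_1\dots v_{2i-1}v_{2n}\dots v_{2i}v_1$ is valid for every $i\in\{1,\dots,n\}$. At $i=1$ the candidate edge $xv_2$ is already on the path, so $1\in S$. Likewise $n\in T$ would mean $yv_{2n-1}$, which is also a path edge. Hence $1\in S$ and $n\in T$, contradicting the claim $n\notin T$ above. Counting again with $S,T\subseteq[n]$ disjoint gives $d(x)+d(y)\le n$. This contradicts $d(x)+d(y)\ge n+1$.

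The one point to double-check is that the closure step preserves the degree condition, which holds because degrees only increase as edges are added. With that, the argument is complete. The bound is sharp, as $B_n^1$ shows: it has $n^2-n+1$ edges, and its vertex of degree $1$ prevents Hamiltonicity.
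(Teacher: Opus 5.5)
The paper gives no proof of this statement. It is quoted from Moon and Moser \cite{Moon} as background, and none of the paper's arguments use it; they rely on Theorem~\ref{thm1-4} instead. Your route is the standard proof of the stronger Ore-type theorem of Moon and Moser, and it is correct in substance. It has three steps:
\begin{enumerate}
\item the count $(n-d(x))+(n-d(y))-1\le e(\widehat{G})\le n-2$, which gives $d(x)+d(y)\ge n+1$ for every nonadjacent pair $x\in X$, $y\in Y$;
\item the closure to a maximal non-Hamiltonian $G'$, which inherits this condition because degrees only grow;
\item the crossing argument on the resulting Hamiltonian $x$--$y$ path.
\end{enumerate}
Going through the Ore condition buys you that stronger theorem, with the edge version falling out of a one-line count.

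However, your paragraph on the \emph{index alignment obstacle} is wrong and should be replaced.
\begin{itemize}
\item Since $v_{2n-1}y$ is a path edge, $n\in T$, not $n\notin T$. What the non-edge $xy\notin E(G')$ actually rules out is $n\in S$ and $1\in T$.
\item Your sentence deriving that $1\in S$ and $n\in T$ \emph{contradicts} $n\notin T$ only exposes an inconsistency between two of your own assertions. It is not a contradiction with the hypotheses.
\item There is no obstacle at all. For every $i\in S\cap T$, the cycle $v_1\cdots v_{2i-1}v_{2n}\cdots v_{2i}v_1$ is a Hamiltonian cycle of $G'$, including the boundary values $i=1$ and $i=n$. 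So $S\cap T=\emptyset$ is forced, not merely assumed.
\end{itemize}
Disjointness of $S,T\subseteq[n]$ then immediately gives
\[
d_{G'}(x)+d_{G'}(y)=|S|+|T|\le n,
\]
contradicting $d_{G'}(x)+d_{G'}(y)\ge d_G(x)+d_G(y)\ge n+1$. With that paragraph replaced by these lines, the proof is complete.
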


\begin{theorem}[\upshape\cite{Ning}]\label{thm1-2}
Let $n\ge 4$ and $G$ be a balanced bipartite graph of order $2n$. If $\rho(G)\ge \rho(B_{n}^1)$, then $G$ is Hamiltonian unless $G\cong B_{n}^1$.
\end{theorem}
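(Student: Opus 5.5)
The plan is to derive Theorem~\ref{thm1-2} from the edge condition of Theorem~\ref{thm1-1}. I would combine a spectral upper bound in terms of the edge count with a closer look at the graphs whose edge count sits just below the threshold $n^2-n+2$. Write $G$ with parts $X,Y$, $|X|=|Y|=n$, and $e=e(G)$.

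The first step is to use the bipartite bound $\rho(G)\le\sqrt{e(G)}$, with equality only for a complete bipartite graph plus isolated vertices. Next I compute $\rho(B_n^1)$. The graph $B_n^1$ has a vertex $y_0\in Y$ adjacent to exactly one vertex $x_0\in X$, and every other pair is an edge. Its quotient matrix on the partition $\{x_0\},X\setminus\{x_0\},\{y_0\},Y\setminus\{y_0\}$ gives $\rho^2$ as the largest root of a quadratic, from which one checks $n(n-1)<\rho(B_n^1)^2<n^2-n+1$. Hence $\rho(G)\ge\rho(B_n^1)$ forces $e(G)>n(n-1)$, i.e.\ $e\ge n^2-n+1$.

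If $e\ge n^2-n+2$, Theorem~\ref{thm1-1} finishes. So the remaining case is $e=n^2-n+1$, where the complement of $G$ in $K_{n,n}$ has exactly $n-1$ edges. I would run the standard closure argument. If $G$ is not Hamiltonian, add edges between nonadjacent $x\in X$, $y\in Y$ with $d(x)+d(y)\ge n+1$; the bipartite Bondy--Chv\'atal closure preserves non-Hamiltonicity. So either the closure is $K_{n,n}$, which is a contradiction, or $G$ has a nonadjacent pair $x,y$ with $d(x)+d(y)\le n$. Counting missing edges, the pair $(x,y)$ accounts for at least $2n-1-(d(x)+d(y))\ge n-1$ missing edges, counting the pair itself once. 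Since exactly $n-1$ edges are missing, all missing edges are incident to $x$ or $y$, and $d(x)+d(y)=n$. Hence $G$ is obtained from $K_{n,n}$ by deleting $k$ edges at $x$ and $n-1-k$ edges at $y$, including $xy$. That is, $\overline{G}$ is a double star $K_{1,k}\cup K_{1,n-1-k}$ glued along the edge $xy$, with $1\le k\le n-1$. Some of these graphs are Hamiltonian; I only need that the non-Hamiltonian ones have small spectral radius unless they equal $B_n^1$. The case $k=n-1$, or symmetrically $n-1-k=0$ with $d(y)=1$, is exactly $B_n^1$.

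The main obstacle is showing that for intermediate $k$, where $d(x)=n-k$ and $d(y)=k$ with $2\le k\le n-2$, we get $\rho(G)<\rho(B_n^1)$. I would write the $4\times4$ quotient matrix for the partition $\{x\},X\setminus\{x\},\{y\},Y\setminus\{y\}$, refined further if the structure requires it. Its characteristic polynomial in $\lambda^2$ should be a quartic in $k$ that is symmetric under $k\leftrightarrow n-1-k$. I would then show that $\rho^2$ is maximized at the extreme values of $k$. One route is to compare characteristic polynomials: evaluate the polynomial for $G_k$ at $\lambda^2=\rho(B_n^1)^2$ and show it is positive with the correct sign pattern. Another route is a Kelmans-type edge-shifting argument, moving missing edges from $y$ to $x$, which should strictly increase $\rho$. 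The hypothesis $n\ge4$ should enter exactly here, since for small $n$ the intermediate cases either do not exist or the inequality fails. Combining the cases gives the conclusion that $G$ is Hamiltonian unless $G\cong B_n^1$.
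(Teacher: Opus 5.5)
There is a gap at the final step, though it is easy to repair. Everything up to the double-star structure is correct: the spectral step giving $e(G)\ge n^2-n+1$, the appeal to Theorem~\ref{thm1-1} when $e(G)\ge n^2-n+2$, and the closure count showing that a non-Hamiltonian $G$ with $e(G)=n^2-n+1$ has $\widehat{G}$ a double star with centres $x,y$ and $d(x)+d(y)=n$.

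\textbf{The gap.} The step you call the main obstacle is only a plan: the inequality $\rho(G_k)<\rho(B_n^1)$ for $2\le k\le n-2$ is never established. The computation you sketch would also not work as stated. The partition $\{x\},X\setminus\{x\},\{y\},Y\setminus\{y\}$ is not equitable, because vertices of $X\setminus\{x\}$ differ in whether they are adjacent to $y$, so you would need six classes.

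\textbf{The step is unnecessary.} Every one of your graphs $G_k$ with $2\le k\le n-2$ is Hamiltonian. All missing edges meet $x$ or $y$, so $G-\{x,y\}$ is $K_{n-1,n-1}$, and $d(x)=n-k\ge 2$, $d(y)=k\ge 2$. Pick distinct $y',y''\in N(x)$ and distinct $x',x''\in N(y)$. Then $x\,y'\,x'\,y\,x''\,Q\,y''\,x$ is a Hamiltonian cycle, where $Q$ is a Hamiltonian path from $x''$ to $y''$ in the complete bipartite graph $K_{n-2,n-2}$ on the remaining vertices. Hence a non-Hamiltonian $G$ with $e(G)=n^2-n+1$ must have $k\in\{1,n-1\}$, i.e.\ $G\cong B_n^1$, and no spectral comparison among the $G_k$ is needed. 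The two extreme cases are $k=1$ and $k=n-1$; your phrase ``symmetrically $n-1-k=0$'' just repeats $k=n-1$.

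With this fix your argument is complete and works for every $n\ge 2$. So $n\ge 4$ plays no role, contrary to your remark: for $n\le 3$ there are simply no intermediate $k$, and no inequality fails.

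\textbf{Comparison with the paper.} The paper does not reprove Theorem~\ref{thm1-2}; it quotes it from Li and Ning. Its Corollary~\ref{co1-1} obtains the stronger bipancyclic version in two steps:
\begin{enumerate}
\item It uses your first step: $\rho(B_n^1)>\sqrt{n(n-1)}$ because $K_{n-1,n}\cup K_1$ is a proper subgraph of the connected graph $B_n^1$, and then $e(G)\ge\rho^2(G)>n^2-n$ by Theorem~\ref{thm1-5}.
\item It then cites Theorem~\ref{thm1-4} directly, which already covers $e(G)=n^2-n+1$ with $B_n^1$ as the only exception.
\end{enumerate}
Once repaired, your closure argument is a self-contained proof of the Hamiltonian part of Theorem~\ref{thm1-4}. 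It relies only on Moon--Moser and the bipartite closure lemma, at the cost of being longer than the paper's citation.
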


A natural and much stronger notion than Hamiltonicity is that of bipancyclicity.
A balanced bipartite graph of order $2n$ is \emph{bipancyclic} if it contains a cycle of every even length from 4 to $2n$. Mitchem and Schmeichel \cite{Mitchem} proved that under the same conditions as Theorem \ref{thm1-1}, $G$ is bipancyclic.

\begin{theorem}[\upshape\cite{Mitchem}]\label{thm1-3}
Let $n\ge 2$ and $G$ be a balanced bipartite graph of order $2n$. If $e(G)\ge n^2-n+2$, then $G$ is bipancyclic.
\end{theorem}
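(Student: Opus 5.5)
We prove Theorem \ref{thm1-3} by induction on $n$, using Theorem \ref{thm1-1} to obtain the Hamilton cycle. The key observation is that the edge condition is inherited by a suitable balanced subgraph on two fewer vertices.

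\emph{Base cases.} For $n=2$ we have $e(G)\ge 4=n^2$, so $G=K_{2,2}=C_4$ and the claim holds. For $n=3$ we have $e(G)\ge 8$, so $G$ is $K_{3,3}$ minus at most one edge. Such a graph is Hamiltonian by Theorem \ref{thm1-1}, and it clearly contains a $C_4$: choose two vertices in each part spanning no missing edge.

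\emph{Inductive step.} Let $n\ge 4$ and assume $e(G)\ge n^2-n+2$. By Theorem \ref{thm1-1}, $G$ has a cycle of length $2n$. It remains to find cycles of every even length $4,\dots,2n-2$. For this we delete one vertex $x\in X$ and one vertex $y\in Y$ and apply the induction hypothesis to $G'=G-\{x,y\}$, a balanced bipartite graph of order $2(n-1)$.

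We need
\[
e(G')=e(G)-d(x)-d(y)+[xy\in E(G)]\ \ge\ (n-1)^2-(n-1)+2=n^2-3n+4,
\]
and it suffices that $d(x)+d(y)-[xy\in E]\le 2n-2$. To arrange this, choose $x$ and $y$ of minimum degree in their parts. Then $d(x)\le e(G)/n$ and $d(y)\le e(G)/n$. If $G=K_{n,n}$, the statement is trivial, since $K_{n,n}$ contains all even cycles. Otherwise, some vertex $x\in X$ has $d(x)\le n-1$. If also some $y\in Y$ has $d(y)\le n-1$, we get $d(x)+d(y)\le 2n-2$ and we are done.

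If instead every vertex of $Y$ has degree $n$, then every vertex of $X$ has degree $n$ as well, so $G=K_{n,n}$, a contradiction. Thus both parts contain a vertex of degree at most $n-1$, and taking one from each part gives $d(x)+d(y)\le 2n-2$. Hence $e(G')\ge n^2-n+2-(2n-2)=n^2-3n+4$, which is exactly the required bound.

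By induction, $G'$ is bipancyclic, so $G'$, and therefore $G$, contains cycles of every even length $4,\dots,2n-2$. Together with the Hamilton cycle this shows $G$ is bipancyclic.

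The only delicate point is the degree count in the inductive step. It requires that deleting a deficient vertex from each side loses at most $2n-2$ edges. This is guaranteed because, when $G\neq K_{n,n}$, each part contains a vertex of degree at most $n-1$.
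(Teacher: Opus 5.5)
Your proof is correct. There is nothing in the paper to match it against: Theorem~\ref{thm1-3} is only quoted from Mitchem and Schmeichel \cite{Mitchem} as background, with no proof given.

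Your argument reduces Theorem~\ref{thm1-3} to Theorem~\ref{thm1-1}. The hypothesis $e(G)\ge n^2-n+2$ is equivalent to $e(\widehat{G})\le n-2$. If $G\ne K_{n,n}$, delete the ends $x\in X$, $y\in Y$ of any non-edge $xy$ of $G$. The resulting balanced graph $G'$ on $2(n-1)$ vertices has $e(\widehat{G'})\le e(\widehat{G})-1\le (n-1)-2$, so it inherits the hypothesis. Induction then gives all cycles of length $4,\dots,2n-2$, and Theorem~\ref{thm1-1} gives the $2n$-cycle. So at this threshold, bipancyclicity costs nothing beyond Moon--Moser Hamiltonicity; all the real work sits in Theorem~\ref{thm1-1}.

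Two cosmetic simplifications are possible. First, the bound $d(x)\le e(G)/n$ is never used. Choosing a non-edge $xy$ directly gives $d(x),d(y)\le n-1$ and $xy\notin E(G)$, with no case discussion needed. Second, the separate $n=3$ base case is unnecessary. Your inductive step already works for $n=3$, where it forces $G'=C_4$.

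The same induction does not immediately give Theorem~\ref{thm1-4}, for two reasons. The inherited bound $(n-1)^2-(n-1)+1$ admits the exceptional graph $B_{n-1}^1$. Also, Theorem~\ref{thm1-1} provides no Hamilton cycle at exactly $n^2-n+1$ edges. This is why the $n^2-n+2$ threshold makes your clean induction possible.
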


\begin{theorem}[\upshape\cite{Adamus}]\label{thm1-4}
Let $G$ be a balanced bipartite graph of order $2n$. If $e(G)\ge n^2-n+1$, then $G$ is bipancyclic unless $G\cong B_{n}^1$.
\end{theorem}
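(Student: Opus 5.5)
We prove Theorem \ref{thm1-4} by induction on $n$, using Theorem \ref{thm1-3} as the main tool. Write $e(G)=n^2-n+1$; if $e(G)\ge n^2-n+2$ we are done by Theorem \ref{thm1-3}, so assume equality. For small $n$ ($n\le 2$) the statement is checked directly: for $n=1$ the condition forces $G\cong K_{1,1}=B_1^1$, and for $n=2$ a graph with $3$ edges on $2+2$ vertices is $P_4=B_2^1$ or a star $K_{1,2}$ plus an isolated vertex. The latter is impossible in a balanced bipartite setting with 3 edges unless it is $K_{1,2}$ plus a pendant edge, which is again $P_4$. So assume $n\ge 3$ and $G\not\cong B_n^1$.

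The complement $\overline{G}$ of $G$ in $K_{n,n}$ has exactly $n-1$ edges. First we obtain the short cycles. Take $x\in X$, $y\in Y$ with $xy\notin E(\overline G)$, i.e.\ $xy\in E(G)$, and such that $d_{\overline G}(x)+d_{\overline G}(y)$ is minimal. Then $G-x-y$ is balanced of order $2(n-1)$ with
\[
e(G-x-y)=n^2-n+1-(d_G(x)+d_G(y)-1)\ge (n-1)^2-(n-1)+1,
\]
as soon as $d_G(x)+d_G(y)\le 2n-1$. This always holds when a suitable pair exists. If instead every edge $xy$ of $G$ has $d_G(x)+d_G(y)=2n$, then $x,y$ are isolated in $\overline G$, and one can choose $x,y$ so that deleting them loses few edges. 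The precise bookkeeping is: pick $x,y$ to be an edge with $d_{\overline G}(x)+d_{\overline G}(y)\ge 1$, which exists because $\overline G$ has $n-1\ge 1$ edges and is not a complete star covering all vertices; then $e(G-x-y)\ge (n-1)^2-(n-1)+1$.

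By induction, $G-x-y$ is bipancyclic, giving all cycle lengths $4,\dots,2n-2$, unless $G-x-y\cong B_{n-1}^1$. In that exceptional case $\overline{G-x-y}$ is a star $K_{1,n-2}$, so $\overline G$ is that star plus one more edge meeting $x$ or $y$. If that extra edge makes $\overline G$ a star $K_{1,n-1}$ centred in the appropriate part, then $G\cong B_n^1$, which we excluded. Otherwise we either rechoose $(x,y)$ to avoid the exception, or exhibit the cycles by hand, since $\overline G$ has a very explicit form.

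It remains to find a Hamiltonian cycle when $G\not\cong B_n^1$. This is the main obstacle. We plan to use the bipartite closure (Bondy--Chvátal for balanced bipartite graphs): if $x\in X$, $y\in Y$ are nonadjacent with $d(x)+d(y)\ge n+1$, then $G$ is Hamiltonian iff $G+xy$ is. Since $\overline G$ has only $n-1$ edges, for a non-edge $xy$ we have $d(x)+d(y)=2n-d_{\overline G}(x)-d_{\overline G}(y)$, and $d_{\overline G}(x)+d_{\overline G}(y)\le n$ because the edge $xy$ is counted once in each. Equality in that bound forces all $n-1$ edges of $\overline G$ to be incident to $x$ or $y$ and to form a double star through $xy$.

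So we add non-edges one at a time while $d_{\overline G}(x)+d_{\overline G}(y)\le n-1$; each addition keeps $\overline G$ small and only makes such sums smaller. The process stops only at a configuration where some non-edge has $d_{\overline G}(x)+d_{\overline G}(y)=n$, i.e.\ $\overline G$ is a double star spanned by $N_{\overline G}(x)\cup N_{\overline G}(y)$. The core of the proof is a short case analysis of these double stars:
\begin{itemize}[label={}]
\item if $d_{\overline G}(y)=1$, then $\overline G=K_{1,n-1}$ centred at $x$, which gives $B_n^1$;
\item otherwise both centres have degree at least $2$, and we construct a Hamiltonian cycle explicitly in $K_{n,n}$ minus the double star.
\end{itemize}
If the closure completes to $K_{n,n}$, then $G$ is Hamiltonian.
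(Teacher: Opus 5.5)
The paper does not prove this statement. It is quoted from Adamus and used only as a black box in the proofs of Theorems~\ref{thm1-7} and~\ref{thm1-8}, so there is no in-paper argument to compare with. Judged on its own, your route works:
\begin{itemize}
\item reduce to $e(G)=n^2-n+1$ via Theorem~\ref{thm1-3};
\item get the lengths $4,\dots,2n-2$ inside $G-x-y$ for a suitable edge $xy$;
\item get the Hamiltonian cycle from the bipartite closure lemma, which is correct with threshold $n+1$.
\end{itemize}
Three steps are stated loosely and should be tightened.

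\textbf{Short cycles.} Here ``minimal'' is the wrong direction, and the exceptional case $G-x-y\cong B_{n-1}^1$ is left as ``rechoose or exhibit by hand''. Both issues disappear if you choose, from the outset, an edge $xy\in E(G)$ with $s:=d_{\overline G}(x)+d_{\overline G}(y)\ge 2$. For $n\ge 3$ such an edge always exists:
\begin{itemize}
\item if some $v$ has $d_{\overline G}(v)\ge 2$, pair it with any $G$-neighbour, which exists since $d_{\overline G}(v)\le n-1$;
\item otherwise $\overline G$ is a matching with at least two edges $x_1y_1,x_2y_2$, and $x_1y_2$ works.
\end{itemize}
Then $e(G-x-y)=n^2-3n+2+s\ge (n-1)^2-(n-1)+2$. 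Theorem~\ref{thm1-3} applied to $G-x-y$ therefore gives all even lengths $4,\dots,2n-2$ directly. You need neither induction on $n$ nor any exceptional case.

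\textbf{Closure step.} After the first added edge, the new complement has at most $n-2$ edges. So every remaining non-edge satisfies $d(x)+d(y)\ge 2n-(n-1)=n+1$, and the closure runs all the way to $K_{n,n}$.

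The process can therefore stall only at the very start, and only if for every edge $xy$ of $\overline G$ all $n-1$ edges of $\overline G$ are incident to $x$ or $y$. In a bipartite graph this forces a star: if both ends of $xy$ had further edges $xy'$ and $x'y$, those two edges would be disjoint. Hence $G\cong B_n^1$, with the centre in either part. You only treated $d_{\overline G}(y)=1$; the symmetric case $d_{\overline G}(x)=1$ is handled the same way.

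In particular, your double star with both centres of degree at least $2$ is never a stopping configuration, since a leaf edge there has $\overline G$-degree sum at most $n-1$. You promise an explicit Hamiltonian cycle for that case but never give one; with this observation you do not need it.

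\textbf{Case $n=2$.} Your remark is garbled, since $K_{1,2}$ has only two edges. Simply note that three edges inside $K_{2,2}$ means $C_4$ minus an edge, which is $P_4\cong B_2^1$.
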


Motivated by Theorems \ref{thm1-1} and \ref{thm1-3}, it is natural to conjecture that the condition in Theorem \ref{thm1-2} also guarantees the bipancyclicity of $G$. To this end, we first present a preliminary result that establishes the relation between the number of edges and the spectral radius of bipartite graphs.

\begin{theorem}[\upshape\cite{Bhattacharya}]\label{thm1-5}
For every bipartite graph $G$ with $e(G)\ge 1$, we have $\rho^2(G)\le e(G)$, with equality if and only if $G$ is the disjoint union of a complete bipartite graph and isolated vertices.
\end{theorem}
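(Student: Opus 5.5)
We have to prove Theorem \ref{thm1-5}: for a bipartite graph $G$ with at least one edge, $\rho^2(G)\le e(G)$, with equality if and only if $G$ is a complete bipartite graph plus isolated vertices.

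\textbf{Inequality.} Let $A$ be the adjacency matrix of $G$, ordered by the bipartition $(X,Y)$, so that
\[
A=\begin{pmatrix}0&B\\ B^{T}&0\end{pmatrix}.
\]
The eigenvalues of $A$ are $\pm\sigma_1,\dots,\pm\sigma_r$ together with zeros, where $\sigma_1\ge\dots\ge\sigma_r>0$ are the nonzero singular values of $B$. In particular $\rho(G)=\sigma_1$. Since $B$ is a $0/1$ matrix,
\[
\sum_{i}\sigma_i^2=\operatorname{tr}(BB^{T})=\sum_{u,v}b_{uv}^2=e(G).
\]
Hence $\rho^2(G)=\sigma_1^2\le\sum_i\sigma_i^2=e(G)$. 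Equivalently, $\sum_j\lambda_j^2=2e(G)$ and the eigenvalues $\rho$ and $-\rho$ both occur, so $2\rho^2\le 2e(G)$.

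\textbf{Equality: necessity.} Equality forces $\sigma_2=0$, so $B$ has rank $1$. A rank-one $0/1$ matrix has the form $B=\mathbf{a}\mathbf{b}^{T}$ with $\mathbf{a},\mathbf{b}$ $0/1$ vectors. To see this, observe that all nonzero rows are multiples of one row, and a $0/1$ multiple of a nonzero $0/1$ row must equal that row. Let $S=\operatorname{supp}(\mathbf{a})\subseteq X$ and $T=\operatorname{supp}(\mathbf{b})\subseteq Y$. Then every vertex of $S$ is adjacent to every vertex of $T$, and there are no other edges. So $G\cong K_{|S|,|T|}$ plus isolated vertices.

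\textbf{Equality: sufficiency.} If $G$ is $K_{s,t}$ plus isolated vertices, then $\rho=\sqrt{st}$ and $e=st$, so equality holds.

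The only step needing any care is the rank-one characterization of $0/1$ matrices, and that is elementary.
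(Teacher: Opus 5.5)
The paper gives no proof of this statement. It is quoted from \cite{Bhattacharya} and used as a black box in Corollaries~\ref{co1-1} and~\ref{cor1-2} and in Lemma~\ref{lem3-1}; in the lemma, the equality case is what forces $G\cong K_{n-1,n}\cup K_1$. So there is no in-paper argument to compare with.

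Your proposal is a correct and complete proof, and it is the standard one. The identity $\rho(G)=\sigma_1(B)$ together with $\sum_i\sigma_i^2=\mathrm{tr}(BB^{T})=e(G)$ gives the inequality. Equality forces $\mathrm{rank}\,B=1$. Your observation that all nonzero rows of a rank-one $0/1$ matrix coincide then correctly shows that the edge set is exactly $S\times T$.

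The hypothesis $e(G)\ge 1$ is used only implicitly: it gives $B\neq 0$, so the rank is exactly one and $S,T\neq\emptyset$. That is sufficient as written.
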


By Theorems \ref{thm1-4} and \ref{thm1-5}, we immediately obtain the following corollary.

\begin{corollary}\label{co1-1}
Let $G$ be a balanced bipartite graph of order $2n$. If $\rho(G)\ge \rho(B_{n}^1)$, then $G$ is bipancyclic unless $G\cong B_{n}^1$.
\end{corollary}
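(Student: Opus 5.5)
The plan is to convert the spectral hypothesis into an edge count via Theorem \ref{thm1-5}, and then apply Theorem \ref{thm1-4}. Let $G$ be balanced bipartite of order $2n$ with $\rho(G)\ge\rho(B_n^1)$, and suppose $G\not\cong B_n^1$. We must show that $e(G)\ge n^2-n+1$. Once this holds, Theorem \ref{thm1-4} gives that $G$ is bipancyclic.

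The key step is to bound $\rho(B_n^1)$ from below strictly above $\sqrt{n^2-n}$. Write $B_n^1$ as follows: $X=\{x_1,\dots,x_n\}$ and $Y=\{y_1,\dots,y_n\}$, and the deleted $K_{n-1,1}$ consists of the edges $x_1y_j$ for $j\ge 2$. So $x_1$ is adjacent only to $y_1$, and every other edge is present. Then $e(B_n^1)=n^2-n+1$.

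To compare $\rho(B_n^1)$ with $\sqrt{n(n-1)}$, note that $B_n^1$ contains $K_{n-1,n}$ (on $X\setminus\{x_1\}$ and $Y$) as a proper subgraph. The graph $B_n^1$ is connected for $n\ge 2$, so Perron--Frobenius gives
\[
\rho(B_n^1)>\rho(K_{n-1,n})=\sqrt{n(n-1)}.
\]
Hence $\rho(G)^2>n^2-n$. Theorem \ref{thm1-5} gives $e(G)\ge\rho(G)^2$, so $e(G)>n^2-n$. Since $e(G)$ is an integer, $e(G)\ge n^2-n+1$. (If $e(G)=0$ then $\rho(G)=0$, which contradicts $\rho(G)>0$.)

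By Theorem \ref{thm1-4}, $G$ is bipancyclic unless $G\cong B_n^1$, and the case $G\cong B_n^1$ is excluded by assumption, so the corollary follows. For small $n$ ($n=1,2$), the statement is covered by the same argument, since Theorem \ref{thm1-4} has no restriction on $n$. Note also that $B_n^1$ itself has no $4$-cycle through $x_1$, but it is the excluded graph anyway.

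The only step needing care is the strict inequality $\rho(B_n^1)>\sqrt{n(n-1)}$. This strictness is exactly what lets us round up to $n^2-n+1$ rather than only obtaining $n^2-n$. It relies on strict monotonicity of the spectral radius under adding an edge to a connected graph, or more precisely under passing from a proper subgraph to a connected supergraph. I expect no further obstacle.
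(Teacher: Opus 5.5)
Your proposal is correct and matches the paper's proof: strict Perron--Frobenius monotonicity gives $\rho(B_n^1)>\sqrt{n(n-1)}$, Theorem~\ref{thm1-5} turns this into $e(G)\ge n^2-n+1$, and Theorem~\ref{thm1-4} finishes. You also check the $e(G)\ge 1$ hypothesis of Theorem~\ref{thm1-5} explicitly, which the paper leaves implicit.
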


\begin{proof}
Since $B_{n}^1$ is connected and $K_{n-1,n}\cup K_1$ is a proper subgraph of $B_{n}^1$, we have $\rho(B_{n}^1)>\rho(K_{n-1,n}\cup K_1)=\sqrt{n(n-1)}$. Since $\rho(G)\ge \rho(B_{n}^1)$, by Theorem \ref{thm1-5}, it follows that $e(G)>n^2-n$, which implies that $G$ is bipancyclic unless $G\cong B_{n}^1$ by Theorem \ref{thm1-4}.
\end{proof}

\subsection{Main results}
In recent years, the existence of rainbow subgraphs in graph collections has become a popular research topic. Let $\mathbf{G}=\{G_1,\dots,G_s\}$ be a collection of not necessarily distinct graphs with a common vertex set $V$. For a graph $H$ with $V(H)\subseteq V$ and $|E(H)|\leq s$, if there exists an injection $\phi\colon E(H)\to [s]$ such that $e\in E(G_{\phi(e)})$ for each $e\in E(H)$, in other words, each edge of $H$ comes from a distinct graph $G_i$ with $i\in[s]$, then we say that $H$ is \emph{rainbow} in $\mathbf{G}$. In particular, a collection of graphs with a common vertex set is said to be \emph{rainbow Hamiltonian} if it contains a rainbow Hamiltonian cycle.

Many classical theorems have been investigated in the context of graph collections, yielding their rainbow analogues.
More work on graph collections is already available, including general graph collections \cite{Cheng2026,Cheng,Guo,Guo2026,He,Joos,Li2023,Li2024,LiandWang,LiuandChen,MaandZhang,Sun,You,Zhang2026,Zhang}, bipartite graph collections \cite{Bradshaw,Chen,Chen2026,He2026,Hu,Ma}, directed graph collections \cite{Babinski,Chakraborti,Gerbner,Li2025}, hypergraph collections \cite{Cheng2023,Cheng2025,Gao2022,Lu2022,Lu2021} and so on. For more results, we refer the readers to the survey \cite{SunandWang}.

A collection of graphs of order $n$ with a common vertex set is said to be \emph{rainbow pancyclic} if it contains a rainbow cycle of every length from $3$ to $n$.
In 2020, Joos and Kim \cite{Joos} proved that if $n\ge 3$ and $\mathbf{G}=\{G_1,\dots, G_n\}$ is a collection of graphs of order $n$ with a common vertex set $V$ such that $\delta(G_i)\geq\frac{n}{2}$ for each $i\in[n]$, then there exists a rainbow Hamiltonian cycle in $\mathbf{G}$. This result can be regarded as a generalization of Dirac's theorem, since it reduces to Dirac's theorem when $G_1=\dots=G_n$.
In 2024, Li, Li and Li \cite{Li2024} showed that the same degree condition further guarantees rainbow pancyclicity of $\mathbf{G}$ unless $G_1=\dots=G_n\cong K_{\frac{n}{2},\frac{n}{2}}$. In 2025, Zhang and van Dam \cite{Zhang}
presented sufficient conditions in terms of edge number and spectral
radius, respectively, for graph collections to admit rainbow Hamilton cycles. You, Zhang and Zhao \cite{You} strengthened their results from rainbow Hamiltonicity to rainbow pancyclicity.
Recently, Liu, Chen and Ma \cite{LiuandChen} established a stronger Ore-type condition for the existence of rainbow Hamiltonian cycles in graph collections. Under the same Ore-type condition, Li, Wang and Yan \cite{LiandWang} further considered the rainbow pancyclicity.

A bipartite graph with the bipartition $V=(X,Y)$ is called \emph{balanced} if $|X| =|Y|$.
A collection of balanced bipartite graphs of order $2n$ with a common bipartition $V=(X,Y)$ is said to be \emph{rainbow bipancyclic} if it contains a rainbow cycle of every even length from 4 to $2n$.
In 1963, Moon and Moser \cite{Moon} established a Dirac-type condition for Hamiltonicity in balanced bipartite graphs. In 1982, Schmeichel and Mitchem \cite{Schmeichel} further extended this result to bipancyclicity. In 2021, Bradshaw \cite{Bradshaw} generalized both classical results above to balanced bipartite graph collections and obtained their corresponding rainbow analogues.
In 2026, Chen, Liu and Yuan \cite{Chen2026} generalized Theorem \ref{thm1-2} to bipartite graph collections and obtained a rainbow version of Theorem \ref{thm1-2} for $n\ge2$. Independently, He and Lu \cite{He2026} obtained the same conclusion for $n\ge 4$.

\begin{theorem}[\upshape\cite{Chen2026,He2026}]\label{thm1-6}
Let $n\ge 2$ and $\mathbf{G}=\{G_1, \dots, G_{2n}\}$ be a collection of not necessarily distinct balanced bipartite graphs of order $2n$ with a common bipartition $V=(X,Y)$. If $\rho(G_i)\ge \rho(B_{n}^1)$ for each $i\in [2n]$, then $\mathbf{G}$ contains a rainbow Hamiltonian cycle unless $G_1=G_2=\dots=G_{2n}\cong B_{n}^1$.
\end{theorem}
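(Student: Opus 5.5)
The first step is to reduce the spectral hypothesis to an edge hypothesis, as in the proof of Corollary \ref{co1-1}. Since $B_n^1$ is connected and properly contains $K_{n-1,n}\cup K_1$, we have $\rho(B_n^1)>\sqrt{n(n-1)}$. Theorem \ref{thm1-5} then gives $e(G_i)\ge \rho^2(G_i)>n^2-n$, that is, $e(G_i)\ge n^2-n+1$ for every $i\in[2n]$. Moreover, a graph with exactly $n^2-n+1$ edges and $\rho\ge\rho(B_n^1)$ is very restricted. So I would prove the edge version instead: if $e(G_i)\ge n^2-n+1$ for all $i$, then $\mathbf{G}$ has a rainbow Hamiltonian cycle unless all $G_i$ equal the same copy of $B_n^1$. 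If this edge version turns out to have further small exceptions, the spectral condition is used at the end to exclude them. The case $n=2$ (where $B_2^1\cong P_4$) I would check by hand.

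For the edge version I would argue by a longest-rainbow-path and closing argument, in the spirit of Bondy's proof of Theorem \ref{thm1-1}.

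\textbf{Path.} First I obtain a rainbow Hamiltonian path $P=v_1v_2\cdots v_{2n}$ with $v_1\in X$, $v_{2n}\in Y$, using $2n-1$ of the colors. Each $G_i$ misses at most $n-1$ of the $n^2$ possible $X$--$Y$ edges, so every vertex has degree at least $1$ in each $G_i$, and the total missing-edge budget is small. A maximal rainbow path can therefore be extended or rotated (Pósa rotations) until it spans $V$.

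\textbf{Closing.} Let $c$ be the unused color. If $v_1v_{2n}\in E(G_c)$ we are done. Otherwise, for each $j\in\{1,\dots,n-1\}$ consider the pair of edges $v_1v_{2j}$ and $v_{2j-1}v_{2n}$. Suppose the first lies in $G_c$ and the second lies in $G_{c'}$, where $c'$ is the color of the path edge $v_{2j-1}v_{2j}$. Then deleting $v_{2j-1}v_{2j}$ and adding these two edges produces a rainbow Hamiltonian cycle. More generally, I may recolor the path edges: every path edge is present in many graphs, so the freed color can be chosen flexibly.

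If no such switch exists, then for every $j$ the graph $G_c$ misses $v_1v_{2j}$, or the relevant graph misses $v_{2j-1}v_{2n}$. This produces $n$ missing edges distributed among the graphs: $v_1v_{2n}$ together with one edge for each $j$. The budget of $n-1$ missing edges per graph, combined with swapping which color is left unused, then forces a contradiction unless every graph misses exactly the same $n-1$ edges and they all form a star $K_{1,n-1}$ at one vertex. That is precisely $G_1=\dots=G_{2n}\cong B_n^1$.

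\textbf{Main obstacle.} The hard step is the extremal case $e(G_i)=n^2-n+1$. Here the counting is tight, and I must exploit the fact that the graphs differ. My first attempt would be the following. If $G_a\ne G_b$ for some $a,b$, choose an edge $e\in E(G_a)\setminus E(G_b)$. Then build the path so that $G_b$ is the unused color, or so that $e$ is forced into the cycle with color $a$. Finally, re-run the counting with the unused color ranging over all $2n$ possibilities. In each configuration this yields $n$ missing edges in a single graph, contradicting the budget of $n-1$. Along the way I also need a separate check that no non-Hamiltonian graph with $n^2-n+1$ edges other than $B_n^1$ satisfies $\rho\ge\rho(B_n^1)$. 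This follows from Theorem \ref{thm1-4}, since every non-bipancyclic graph with $n^2-n+1$ edges is isomorphic to $B_n^1$.
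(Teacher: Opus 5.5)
Your reduction from the spectral to the edge hypothesis is correct and is exactly the paper's (Theorem \ref{thm1-5} together with $\rho(B_n^1)>\sqrt{n(n-1)}$). The edge version you then target is also true: it is the case $\ell=n$ of Theorem \ref{thm1-7}, with no further exceptions, so no spectral clean-up is needed at the end.

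The gap is in your proof of that edge version, at the closing step. Fix a rainbow Hamiltonian path $P$, a coloring $\phi$ of it, and the unused color $c$. Failure of all your switches tells you only two things: $G_c$ misses $v_1v_{2n}$, and for each $j$, either $G_c$ misses $v_1v_{2j}$ or $G_{c'_j}$ misses $v_{2j-1}v_{2n}$, where the colors $c'_j=\phi(v_{2j-1}v_{2j})$ are pairwise distinct. These $n$ non-incidences may lie one in each of $n$ different graphs. Then no graph comes near its budget of $n-1$ missing edges. This blocks the argument even when every $e(G_i)\ge n^2-n+2$, not just in the tight case.

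Even for a single graph the count yields only $n-1$ distinct missing edges, because for $j=1$ the forced missing edge is $v_1v_{2n}$ again. This is why Bondy's closure proves only Theorem \ref{thm1-1}, and why the threshold $n^2-n+1$ needs the further structural analysis behind Theorem \ref{thm1-4}.

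Your sentence that the budget ``combined with swapping which color is left unused, then forces a contradiction'' is therefore the whole content of the theorem. No mechanism is offered for concentrating the deficiencies in one graph. The plan for the extremal case (choose $e\in E(G_a)\setminus E(G_b)$ and re-run the count) is equally unsupported. The rainbow Hamiltonian path via P\'osa rotations is also only asserted: a rotation needs its new edge to lie in a currently free color, which is not automatic when only one or two colors are free.

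The idea that avoids all of this is not to build the cycle color by color. The union $G=\bigcup_i G_i$ has at least $n^2-n+1$ edges, so by Theorem \ref{thm1-4} it is Hamiltonian unless $G\cong B_n^1$. In that case each $G_i\subseteq G$ has $e(G_i)\ge e(G)$ and so equals $G$. It remains to find a rainbow Hamiltonian cycle of $G$.

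Put $\omega(e)=|\{i:e\notin E(G_i)\}|$ and $m=e(\widehat G)$; then $\sum_e\omega(e)\le 2n(n-1-m)$. If $m=n-1$, all $G_i$ coincide and every Hamiltonian cycle of $G$ is rainbow. If $m\le n-2$, a switching count (Lemma \ref{lem2-1}) shows each edge lies in at most a $2/(n-m)$ fraction of the Hamiltonian cycles of $G$. Averaging then gives a Hamiltonian cycle $C$ with $\omega(C)\le 4n-4$. A Hall violator $S\subseteq E(C)$ in the edge--color incidence graph would satisfy $2\le|S|\le n-1$ and force $\omega(C)\ge|S|(2n-|S|+1)\ge 4n-2$, a contradiction. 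This averaging over all Hamiltonian cycles of the union is exactly the global aggregation your per-path count is missing.
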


Motivated by the aforementioned works, it is natural to ask whether Theorem \ref{thm1-6} can be extended to rainbow bipancyclicity. In this paper, we give an affirmative answer to this question, which is stated as Corollary \ref{cor1-2}. In fact, we obtain a stronger result, namely Theorem \ref{thm1-7}.

\begin{theorem}\label{thm1-7}
Let $\mathbf{G}=\{G_1, \dots, G_{2n}\}$ be a collection of not necessarily distinct balanced bipartite graphs of order $2n$ with a common bipartition $V=(X,Y)$. If $e(G_i)\ge n^2-n+1$ for each $i\in [2n]$, then $\mathbf{G}$ is rainbow bipancyclic unless $G_1=G_2=\dots=G_{2n}\cong B_{n}^1$.
\end{theorem}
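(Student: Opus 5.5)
We will prove Theorem \ref{thm1-7} by reducing each cycle length to a single-graph statement, using a standard ``transversal'' argument on the union of edges. The basic tool is the following observation. Let $H$ be a graph with $\ell\le 2n$ edges, and suppose $H$ is not rainbow in $\mathbf{G}$. Then by Hall's theorem applied to the bipartite incidence structure (edges of $H$ versus indices $i\in[2n]$ with $e\in E(G_i)$), some set $S\subseteq E(H)$ lies in fewer than $|S|$ of the graphs $G_i$. We will avoid general Hall arguments, however, and instead use an extremal/switching approach in the style of Joos--Kim and Zhang--van Dam.

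\textbf{Step 1 (trivial cases and the single-graph case).} For $n\le 2$ we check directly. Note that $n^2-n+1$ edges for $n=2$ forces $P_4$ or $C_4$; unless every $G_i=P_4$ is the same $P_4$, there are two distinct graphs, and we obtain a rainbow $C_4$. If $G_1=\dots=G_{2n}=G$, the statement is exactly Theorem \ref{thm1-4}. So from now on we assume that not all $G_i$ are equal, or that they are equal but not isomorphic to $B_n^1$.

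\textbf{Step 2 (short cycles $C_{2k}$, $2\le k<n$).} Here we use a greedy argument. Each $G_i$ misses at most $n-1$ edges of $K_{n,n}$. Choose a fixed copy $C$ of $C_{2k}$ in $K_{n,n}$ at random, or choose it cleverly, and assign the edges one at a time. The condition needed is that each edge $e$ of $C$ lies in many $G_i$. Counting the pairs (edge, missing graph) gives $\sum_e |\{i: e\notin G_i\}|\le 2n(n-1)$, which averages below $2$ over the $n^2$ edges. So most edges of $K_{n,n}$ are present in at least $2n-1$ graphs. We then pick $C$ using only edges that are absent from few graphs, and since $2k\le 2n-2$, a greedy or Hall assignment succeeds. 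Concretely, we will show that there is a $2k$-cycle all of whose edges are each missing from at most $2n-2k$ graphs; Hall's condition then holds trivially. Finding such a cycle will use Theorem \ref{thm1-3}-type density arguments on the graph of ``good'' edges, where an edge is good if it is missing from at most one $G_i$. The number of bad edges is at most $n(n-1)$, which is not small. So instead we will argue with the multigraph: the graph $G^*$ of edges lying in at least $2k$ graphs has at least $n^2-n+1$ edges by averaging. If it has at least $n^2-n+2$ edges, Theorem \ref{thm1-3} gives $C_{2k}$ in $G^*$, and Hall's condition holds. The case where $G^*$ has exactly $n^2-n+1$ edges needs separate care.

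\textbf{Step 3 (Hamiltonian length $2n$).} This case requires all $2n$ colours, so greedy arguments fail. We will follow the proof of Theorem \ref{thm1-6} by Chen--Liu--Yuan and He--Lu, adapted to edge counts. Take a rainbow cycle or path system of maximum length, with a largest rainbow path $P=x_1y_1\dots$ using colours $[2n-1]$ and one unused colour $c$. Apply Pósa/Bondy--Chvátal rotations: endpoints $u,v$ with $uv\notin G_c$ yield, via edge counts $e(G_c)\ge n^2-n+1$ and the pigeonhole principle over consecutive pairs on $P$, a crossing pair of edges in $G_c$ and in a swapped colour. This gives a longer rainbow structure, unless the degree sum of the endpoints in every $G_i$ is tight. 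Tightness forces each $G_i$ to contain a vertex of degree $1$ adjacent to a common vertex, that is, $G_i\cong B_n^1$ with the same missing star. That yields the exceptional collection.

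\textbf{Main obstacle.} The main obstacle is the tight cases: the equality case $e(G^*)=n^2-n+1$ in Step 2, and the rigidity analysis in Step 3 showing that tightness propagates to all $2n$ graphs and forces them to be identical copies of $B_n^1$. For Step 2's equality case, we will first try to show that $G^*\cong B_n^1$ forces every $G_i$ to contain that $B_n^1$. Then $C_{2k}$ with $k<n$ exists inside $B_n^1$ by Theorem \ref{thm1-4}'s proof, because $B_n^1$ contains $K_{n-1,n-1}$ and is therefore bipancyclic except for length $2n$.
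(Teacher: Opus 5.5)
Your proposal has genuine gaps in both Step 2 and Step 3.

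\textbf{Step 2.} The averaging claim is false, and the strategy behind it cannot work. From $\sum_e|\{i:e\notin E(G_i)\}|\le 2n(n-1)$ you only get that at most $2n(n-1)/(2n-2k+1)$ edges lie in fewer than $2k$ graphs. For $k$ close to $n$ this is of order $n^2$, so nothing forces $e(G^*)\ge n^2-n+1$. Concretely, take $n=3$, $G_1=G_2=G_3=K_{3,3}-\{x_1y_1,x_2y_2\}$ and $G_4=G_5=G_6=K_{3,3}-\{x_3y_3,x_1y_2\}$. Each $G_i$ has $7=n^2-n+1$ edges. For $k=2$ your $G^*$ is $K_{3,3}-\{x_1y_1,x_2y_2,x_3y_3,x_1y_2\}$, which is the path $x_1y_3x_2y_1x_3y_2$: it has $5$ edges and no $4$-cycle. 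Yet $x_1y_1x_2y_3x_1$ is a rainbow $4$-cycle (take $x_1y_1$ from $G_4$). So requiring every edge of the cycle to lie in at least $2k$ graphs is too strong a pointwise condition.

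\textbf{Step 3.} This is a sketch, not a proof. It assumes a rainbow Hamiltonian path with one spare colour $c$ already exists. The pigeonhole over crossing pairs also does not close, because the colour freed by deleting $v_jv_{j+1}$ from $P=v_1\cdots v_{2n}$ is $c_j$, which changes with $j$. With the colouring of $P$ fixed, closing at position $j$ needs one of $v_1v_{j+1},v_jv_{2n}$ in $G_c$ and the other in $G_{c_j}$. Suppose $G_c=K_{n,n}-v_1v_{2n}$ and each $G_{c_j}$ misses exactly the two edges $v_1v_{j+1},v_jv_{2n}$; for $n\ge 3$ this is within its budget of $n-1$ missing edges. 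Then no crossing works, even though the collection is rainbow Hamiltonian by the theorem itself. Recovering requires recolouring globally, and nothing in your sketch supports the claimed rigidity (``tightness forces identical copies of $B_n^1$'').

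\textbf{The missing idea.} Control the \emph{total} deficiency of a cycle rather than each edge separately.
\begin{itemize}
\item Work with the union $G=\bigcup_i G_i$, which has at least $n^2-n+1$ edges. By Theorem~\ref{thm1-4} it is bipancyclic unless $G\cong B_n^1$. In that case $G_i\subseteq G$ and $e(G_i)\ge e(G)$ force $G_i=G$ for all $i$, which is exactly the exceptional collection.
\item To make a $2\ell$-cycle of $G$ rainbow, for all $\ell$ at once including $\ell=n$, define $\omega(C)=\sum_{e\in E(C)}|\{i:e\notin E(G_i)\}|$.
\item A Hall obstruction $S\subseteq E(C)$ is absent from every $G_i$ with $i\in Q$, where $|Q|\ge 2n-|S|+1$. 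Since each $G_i$ misses at most $n-1$ edges, $2\le|S|\le n-1$, and hence $\omega(C)\ge|S|(2n-|S|+1)\ge 4n-2$.
\item With $m=e(\widehat{G})$ one has $\sum_e\omega(e)\le 2n(n-1-m)$. The case $m=n-1$ is trivial, since then all $G_i=G$.
\item For $m\le n-2$, a cycle-switching argument shows every edge lies in at most a $2/(n-m)$ fraction of the $2\ell$-cycles of $G$. Hence some $2\ell$-cycle has $\omega(C)\le 4n-4$, so it has no Hall obstruction and is rainbow.
\end{itemize}
This argument replaces both your Step 2 and your Step 3.
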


By Theorems \ref{thm1-5}, \ref{thm1-7} and the fact that $\rho(B_{n}^1)>\sqrt{n(n-1)}$, the following corollary is immediate.

\begin{corollary}\label{cor1-2}
Let $\mathbf{G}=\{G_1, \dots, G_{2n}\}$ be a collection of not necessarily distinct balanced bipartite graphs of order $2n$ with a common bipartition $V=(X,Y)$. If $\rho(G_i)\ge \rho(B_{n}^1)$ for each $i\in [2n]$, then $\mathbf{G}$ is rainbow bipancyclic unless $G_1=G_2=\dots=G_{2n}\cong B_{n}^1$.
\end{corollary}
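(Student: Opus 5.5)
The statement to prove is Corollary \ref{cor1-2}, and the plan is to reduce it to Theorem \ref{thm1-7} using the edge--spectral inequality of Theorem \ref{thm1-5}. This mirrors the proof of Corollary \ref{co1-1}. We first dispose of the trivial small case $n=1$, where $2n=2$ and there are no even lengths from $4$ to $2n$ to produce. That case is vacuous, or at most a degenerate statement we can check by hand, so we assume $n\ge 2$.

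The first step is to show the strict inequality $\rho(B_{n}^1)>\sqrt{n(n-1)}$. Recall that $B_{n}^1$ is obtained from $K_{n,n}$ by deleting the edges of a $K_{n-1,1}$, so one vertex $y\in Y$ keeps exactly one neighbour $x\in X$. The graph $B_n^1$ is connected, since $x$ is adjacent to all of $Y$ and every vertex of $X$ other than $x$ is adjacent to $Y\setminus\{y\}\neq\emptyset$. Deleting the edge $xy$ leaves $K_{n,n-1}\cup K_1$, whose spectral radius is $\sqrt{n(n-1)}$ by Theorem \ref{thm1-5}. Since $B_n^1$ is connected and the deleted graph is a proper subgraph, the Perron--Frobenius theorem gives strict monotonicity of the spectral radius, which yields $\rho(B_n^1)>\sqrt{n(n-1)}$.

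The second step transfers the spectral hypothesis to an edge count. Each $G_i$ satisfies $\rho(G_i)\ge\rho(B_n^1)>0$, so $e(G_i)\ge1$ and Theorem \ref{thm1-5} applies. It gives $e(G_i)\ge\rho^2(G_i)\ge \rho^2(B_n^1)>n(n-1)$. Because $e(G_i)$ is an integer, this means $e(G_i)\ge n^2-n+1$ for every $i\in[2n]$.

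The final step is to invoke Theorem \ref{thm1-7}: the collection $\mathbf{G}$ is rainbow bipancyclic unless $G_1=\dots=G_{2n}\cong B_n^1$, which is exactly the claimed exceptional case. There is no real obstacle here. The only point needing care is the strictness in the first step, and that is secured by connectivity of $B_n^1$ together with Perron--Frobenius.
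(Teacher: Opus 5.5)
Your proposal is correct and follows the paper's route: you get $\rho(B_n^1)>\sqrt{n(n-1)}$ from the connectivity of $B_n^1$ and strict Perron--Frobenius monotonicity, exactly as in the proof of Corollary~\ref{co1-1}. You then use Theorem~\ref{thm1-5} to upgrade the spectral hypothesis to $e(G_i)\ge n^2-n+1$ and conclude by Theorem~\ref{thm1-7}.
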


It can be seen that Theorem \ref{thm1-7} and Corollary \ref{cor1-2} generalize Theorem \ref{thm1-4} and Corollary \ref{co1-1} from bipartite graphs to bipartite graph collections, respectively.

Noting that $\rho(B_{n}^1)>\sqrt{n(n-1)}$, we relax the condition in Corollary \ref{cor1-2} to $\rho(G_i)\ge \sqrt{n(n-1)}$ for each $i\in[2n]$, and obtain Theorem \ref{thm1-8}.
For the convenience of stating the result, we introduce the following notation. For each $v\in V=(X,Y)$ with $|X|=|Y|=n$, let $B_v$ ($\cong K_{n-1,n}\cup K_1$) denote the balanced bipartite graph on vertex set $V$ such that $v$ is an isolated vertex and $V\setminus\{v\}$ induces a bipartite graph $K_{n-1,n}$. For distinct $u,v\in V$ in different partite sets, let $B_{v,u}$ ($\cong B_{n}^1$) be the graph obtained from $B_v$ by adding the edge $vu$.

\begin{theorem}\label{thm1-8}
Let $n\ge 2$ and $\mathbf{G}=\{G_1, \dots, G_{2n}\}$ be a collection of not necessarily distinct balanced bipartite graphs of order $2n$ with a common bipartition $V=(X,Y)$. If $\rho(G_i)\ge \sqrt{n(n-1)}$ for each $i\in [2n]$, then $\mathbf{G}$ is not rainbow bipancyclic if and only if one of the following holds:
\begin{enumerate}[label=(\roman*), font=\upshape, itemsep=2pt, align=left, leftmargin=1em]
 \item there exists a vertex $v\in V$ such that $|\{i\in[2n]:G_i=B_v\}| \ge 2n-1$;
 \item there exist distinct $u,v\in V$ in different partite sets such that for each $i\in[2n]$,
 \[G_i\in
 \begin{cases}
 \{B_v,B_w,B_{v,u}\}, \text{where } w\,(\neq u) \text{ is in the same partite set as } u, & \text{if } n=2;\\[4pt]
 \{B_v,B_{v,u}\}, & \text{if } n\ge 3.
 \end{cases}
 \]
\end{enumerate}
\end{theorem}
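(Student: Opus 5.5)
We reduce everything to Theorem \ref{thm1-7} via Theorem \ref{thm1-5}. If $\rho(G_i)\ge\sqrt{n(n-1)}$, then $e(G_i)\ge n^2-n$. Equality $e(G_i)=n^2-n$ forces $\rho^2(G_i)=e(G_i)$, so by Theorem \ref{thm1-5} $G_i$ is a complete bipartite graph plus isolated vertices. Such a graph with $n^2-n$ edges inside the bipartition $(X,Y)$ with $|X|=|Y|=n$ is $K_{n-1,n}\cup K_1=B_v$ for some $v$ (since $ab=n^2-n$ with $a,b\le n$ forces $\{a,b\}=\{n-1,n\}$). So each $G_i$ either has at least $n^2-n+1$ edges (call these \emph{rich}) or equals some $B_v$ (\emph{poor}).

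\textbf{Sufficiency (the listed collections fail).} In case (i), at least $2n-1$ graphs have $v$ isolated, so a Hamiltonian rainbow cycle would need two edges at $v$ but at most one graph contains edges at $v$. In case (ii) with $n\ge3$, every $G_i$ is a subgraph of $B_{v,u}$, and $B_{v,u}\cong B_n^1$ has $v$ of degree $1$, so it has no Hamiltonian cycle; any rainbow Hamiltonian cycle would live in the union $B_{v,u}$. For $n=2$, the graphs $B_v,B_w,B_{v,u}$ are subgraphs of $P_4$-like graphs on $4$ vertices. A $4$-cycle needs all four edges of $K_{2,2}$, but the edge $vw'$ (where $w'$ is the other vertex on $u$'s side, i.e.\ $w$) lies in none of them. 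So a direct check settles this case.

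\textbf{Necessity (otherwise rainbow bipancyclic).} Suppose neither (i) nor (ii) holds. If all graphs are rich, Theorem \ref{thm1-7} applies; its exceptional case, all $G_i=B_n^1$, is covered by (ii). Otherwise we add edges. For each poor $G_i=B_v$, consider $G_i'=B_{v,u}$ for a suitable $u$. Then $e(G_i')=n^2-n+1$, and we want to apply Theorem \ref{thm1-7} to the modified collection. The problem is that a rainbow cycle there might use the added edge $vu$ from $G_i'$. So the real argument must be a direct one, in the style of Theorem \ref{thm1-7}'s proof. For each even length $2\ell$ we find a rainbow $C_{2\ell}$, and we treat $2\ell<2n$ and $2\ell=2n$ separately.

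For $2\ell<2n$ we choose the cycle avoiding the isolated vertices. Let $S$ be the set of vertices isolated in some poor graph. If $|S|\le$ a few vertices, pick a cycle of length $2\ell\le 2n-2$ inside $V\setminus S$ (when $|S|\le 2$, with both partite sides still large). On these vertices every poor graph is complete bipartite and every rich graph misses at most $n-1$ edges, so a greedy or Hall-type assignment of distinct colours works. If $S$ is large, then many distinct $B_v$'s occur. The union of the collection is then nearly complete, and we can route cycles through vertices using colours whose graphs contain them.

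For the Hamiltonian length we use Theorem \ref{thm1-6}-type reasoning, or we adapt the edge-version argument. Each vertex $v$ must receive two distinct colours at it. Failure of (i) means at least two graphs are not $B_v$, so each vertex has at least two graphs covering it. Failure of (ii) rules out the structure where every graph is contained in a common non-Hamiltonian $B_{v,u}$.

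\textbf{Main obstacle.} The main obstacle is this Hamiltonian case, together with the small cases $n=2,3$, where the exceptional family for $n=2$ shows the extremal structure is delicate. We would first try the following. Take the rich graphs (or two graphs covering each vertex) and fix a colour assignment. Then run the standard rotation/switching argument used for rainbow Hamiltonicity under edge conditions. Start from a longest rainbow path and use that each graph misses at most $n$ edges of $K_{n,n}$ to close it up, treating the vertex $v$ with few covering graphs first.
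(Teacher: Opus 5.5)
\textbf{Verdict: genuine gap.} Your rich/poor dichotomy (this is Lemma~\ref{lem3-1}), the sufficiency direction (including the $n=2$ check) and the all-rich case via Theorem~\ref{thm1-7} are correct. The necessity direction for collections containing some $B_v$, which is the real content of the theorem, is not proved.

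\emph{The sketch for $2\ell<2n$ fails as stated.} If two vertices of your set $S$ lie in the same part, $V\setminus S$ has at most $n-2$ vertices on that side, so it contains no $(2n-2)$-cycle. For large $S$ you give no argument at all. Even for a cycle $C$ avoiding $S$, a greedy or Hall-type colouring need not exist. When there are few poor graphs, many rich graphs (each may miss up to $n-1$ edges) can all miss the same two edges of $C$, and that pair is a Hall violator. So the cycle must be \emph{chosen}, not taken arbitrarily. The paper takes $C\in\mathcal{C}_{2\ell}(G)$ with $\omega_T(C)<2t$. Here $t$ is the number of rich graphs and $\omega_T$ counts only misses by rich graphs. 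Such a $C$ is found by averaging over all $2\ell$-cycles of the union $G$, using the switching bound $N_{e,2\ell}\le 2N_{2\ell}/(n-m)$ (Lemmas~\ref{lem2-1} and~\ref{lem3-2}). \emph{The Hamiltonian case you leave as a plan} (rotation from a longest rainbow path), with no mechanism that would produce the exceptional structures (i) and (ii).

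\emph{The missing structural step is to apply Theorem~\ref{thm1-4} to the union $G=\bigcup_i G_i$}, not to individual graphs. This gives three cases.
\begin{enumerate}
\item $e(G)=n^2-n$, and then all $G_i$ equal one $B_v$.
\item $G\cong B_n^1=B_{v,u}$, and then $G_i\subseteq B_{v,u}$ together with Lemma~\ref{lem3-1} gives (ii) at once.
\item $G$ is bipancyclic.
\end{enumerate}

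In the bipancyclic case with $t\ge 1$, take $C$ as above and let $S'\subseteq E(C)$ be a Hall violator.
\begin{itemize}
\item $S'$ must be missed entirely by some poor graph $G_j=B_{v_j}$. Otherwise $\omega_T(C)\ge |S'|(2n-|S'|+1)\ge 4n-2\ge 2t$.
\item Hence $S'$ is the pair of edges of $C$ at $v_j$, and exactly one graph $G_k$ contains both.
\item A further use of $\omega_T(C)<2t$ forces every poor graph to equal $B_{v_j}$ and $G_k$ to be rich.
\item If $t=1$, the collection has exactly $2n-1$ copies of $B_{v_j}$, which is condition (i). For $\ell<n$ a rainbow $2\ell$-cycle avoiding $v_j$ exists anyway.
\item If $t\ge 2$, reroute $C$ at $v_j$ through a neighbour $z$ of $v_j$ in another rich graph $G_p$. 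This changes a single edge of $C-v_j$, and Observation~\ref{ob2-1} finishes the colouring.
\end{itemize}
The all-poor case $t=0$ is a direct Hall check of the same kind.

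This argument handles every length, including $2n$, uniformly, and it is exactly where the exceptional families (i) and (ii) arise. Without it, your necessity direction remains open.
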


\subsection{Notation and organization}
Here, we introduce some basic notation and concepts in graph theory. All graphs considered in this paper are simple, finite and undirected.
Let $G=(V(G),E(G))$ be a simple graph with vertex set $V(G)$ and edge set $E(G)$, where $|E(G)|=e(G)$. The \emph{adjacency matrix} $A(G)=(a_{uv})_{u,v\in V(G)}$ of $G$ is a $|V(G)|\times |V(G)|$ matrix, where $a_{uv}=1$ if $uv\in E(G)$, and $a_{uv}=0$ otherwise. The maximum of the moduli of the eigenvalues of $A(G)$ is called the \emph{spectral radius} of $G$ and denoted by $\rho(G)$. A \emph{matching} in $G$ is a set of pairwise disjoint edges.

For a vertex $u\in V(G)$, the set of neighbours of $u$ in $G$ is denoted by $N_{G}(u)$. For $U\subseteq V(G)$, the \emph{induced subgraph} $G[U]$ of $G$ satisfies $V(G[U])=U$ and $E(G[U])=\{uv\in E(G): u,v\in U\}$, and we say that the vertex subset $U$ induces the subgraph $G[U]$.
A graph $H$ is a \emph{subgraph} of a graph $G$, denoted by $H \subseteq G$,
if $V(H)\subseteq V(G)$ and $E(H)\subseteq E(G)$. We call $H$ a \emph{proper subgraph} of $G$, denoted by $H \subsetneq G$, if $H\subseteq G$ and $H \neq G$.
The \emph{union} of two graphs $H$ and $G$ is the graph $H\cup G$ with vertex set $V(H)\cup V(G)$ and edge set $E(H)\cup E(G)$.

Let $G$ be a bipartite graph with bipartition $(X,Y)$. We use $\widehat{G}$ to denote the \emph{quasi-complement} of $G$, that is, the graph with vertex set $V(\widehat{G})=V(G)$ such that for any $x\in X$ and $y\in Y$, $xy\in E(\widehat{G})$ if and only if $xy\notin E(G)$.
We use $P_n$, $K_n$ and $K_{a,b}$ to denote the path of order $n$, the complete graph of order $n$ and the complete bipartite graph with partitions of size $a$ and $b$, respectively. A cycle of order $n$ is called an $n$-cycle.
For a positive integer $n$, let $[n]=\{1,2,\dots,n\}$, and we set $[0]=\emptyset$. For any terminology and notation that are not explicitly defined, we refer the reader to \cite{Bondy}.


\section{Proof of Theorem \ref{thm1-7} }
This section will be dedicated to proving Theorem \ref{thm1-7}. We first present several auxiliary results that will be used later.

\begin{theorem}[\upshape\cite{Hall}]\label{thm2-1}
Let $B$ be a bipartite graph with bipartition $(X,Y)$. There is a matching which covers every vertex in $X$ if and only if $|N_B(S)|\ge |S|$ for all $S\subseteq X$.
\end{theorem}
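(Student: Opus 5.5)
The plan is the standard induction on $|X|$ (the Halmos--Vaughan argument). Necessity is immediate: if a matching $M$ covers $X$, then for every $S\subseteq X$ the $M$-partners of the vertices of $S$ are $|S|$ distinct vertices of $N_B(S)$, so $|N_B(S)|\ge |S|$. For sufficiency, assume Hall's condition $|N_B(S)|\ge |S|$ for all $S\subseteq X$ and induct on $|X|$. The case $|X|=1$ is trivial, since the single vertex has a neighbour. For $|X|\ge 2$ I will split into two cases according to whether some nonempty proper subset of $X$ is \emph{tight}, meaning $|N_B(S)|=|S|$.

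\emph{Case 1: no tight set.} Suppose $|N_B(S)|\ge |S|+1$ for every nonempty $S\subsetneq X$. Pick any $x\in X$ and any neighbour $y$ of $x$ (one exists by Hall's condition with $S=\{x\}$). Let $B'=B-\{x,y\}$, with parts $X\setminus\{x\}$ and $Y\setminus\{y\}$. For nonempty $S\subseteq X\setminus\{x\}$, deleting $y$ lowers the neighbourhood by at most one, so $|N_{B'}(S)|\ge |N_B(S)|-1\ge |S|$. By induction $B'$ has a matching covering $X\setminus\{x\}$, and adding the edge $xy$ gives a matching covering $X$.

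\emph{Case 2: a tight set exists.} Suppose some nonempty $S_0\subsetneq X$ has $|N_B(S_0)|=|S_0|$. The subgraph $B[S_0\cup N_B(S_0)]$ satisfies Hall's condition, because the neighbourhoods of subsets of $S_0$ lie entirely inside $N_B(S_0)$. Since $|S_0|<|X|$, induction gives a matching $M_1$ covering $S_0$.

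Next consider $B''=B[(X\setminus S_0)\cup (Y\setminus N_B(S_0))]$. For $T\subseteq X\setminus S_0$,
\[
|N_{B''}(T)|=|N_B(T\cup S_0)|-|N_B(S_0)|\ge |T|+|S_0|-|S_0|=|T|,
\]
so Hall's condition holds in $B''$. Since $|X\setminus S_0|<|X|$, induction gives a matching $M_2$ covering $X\setminus S_0$. The matchings $M_1$ and $M_2$ use disjoint vertex sets on the $Y$ side, so $M_1\cup M_2$ is a matching covering $X$.

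The main obstacle is this verification of Hall's condition in $B''$ in Case~2. The key point is the identity $N_B(T\cup S_0)=N_{B''}(T)\,\dot\cup\, N_B(S_0)$; it is exactly the tightness of $S_0$ that lets the inequality pass to the leftover graph.
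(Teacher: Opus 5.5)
Your proof is correct and complete. The paper gives no proof of this statement: it is Hall's theorem, quoted with a citation and used as a black box. Only the sufficiency direction is ever invoked, in contrapositive form (if no matching covers $X$, then some $S\subseteq X$ has $|N_B(S)|\le |S|-1$). It appears in Observation~\ref{ob2-1} and in the proofs of Lemmas~\ref{lem2-2} and~\ref{lem3-3}.

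Your argument is the standard Halmos--Vaughan induction, and the details check out:
\begin{itemize}
\item In Case~1, every nonempty $S\subseteq X\setminus\{x\}$ is a proper subset of $X$, so its surplus of at least one absorbs the loss of $y$.
\item In Case~2, both $S_0$ and $X\setminus S_0$ are nonempty, so both inductive calls are on strictly smaller instances with at least one vertex on the left. This is why the base case $|X|=1$ suffices.
\item The computation for $B''$ correctly uses $S_0\cap T=\emptyset$ to get $|T\cup S_0|=|T|+|S_0|$.
\end{itemize}

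Compared with the paper's bare citation, your version makes the exposition self-contained at the cost of a paragraph. An augmenting-path proof would additionally give an algorithm for finding the matching, but nothing in the paper needs that.
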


The following basic observation follows easily from Theorem \ref{thm2-1}, and we omit its proof.

\begin{observation}\label{ob2-1}
Let $0< a\le b$. If a bipartite graph is obtained from $K_{a,b}$ by deleting at most one edge, then it has a matching covering all vertices in the partition of size $a$, except when $a=b=1$ and the only edge is removed.
\end{observation}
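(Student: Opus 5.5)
We prove Observation \ref{ob2-1} directly by checking Hall's condition (Theorem \ref{thm2-1}). Let $B$ be obtained from $K_{a,b}$, with parts $A$ and $Y'$ where $|A|=a\le b=|Y'|$, by deleting at most one edge $e=xy$ with $x\in A$, $y\in Y'$. Let $S\subseteq A$ be nonempty; we need $|N_B(S)|\ge |S|$. If no edge is deleted, then $N_B(S)=Y'$ and $|Y'|=b\ge a\ge |S|$, so we may assume one edge is deleted.

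We split into two cases according to $|S|$.

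\emph{Case $|S|\ge 2$.} Pick $x'\in S$ with $x'\neq x$. Then $x'$ is adjacent to every vertex of $Y'$, so $N_B(S)=Y'$ and $|N_B(S)|=b\ge |S|$.

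\emph{Case $|S|=1$.} Write $S=\{z\}$. If $z\neq x$, then $N_B(z)=Y'$ has size $b\ge 1$. If $z=x$, then $N_B(x)=Y'\setminus\{y\}$ has size $b-1$, which is at least $1$ unless $b=1$. But $b=1$ forces $a=1$, and then $B$ is $K_{1,1}$ with its only edge removed. This is exactly the excluded situation, in which no matching covers $A$.

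Outside the exceptional case, Hall's condition therefore holds for every $S\subseteq A$, and Theorem \ref{thm2-1} yields a matching covering $A$. There is no real obstacle here; the only care needed is to isolate the singleton $S=\{x\}$ as the one place where the exception $a=b=1$ enters.
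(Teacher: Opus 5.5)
Your proof is correct and takes the same route as the paper. The paper omits the proof, saying only that the observation follows easily from Hall's theorem (Theorem~\ref{thm2-1}), and your case split ($|S|\ge 2$ versus $S=\{x\}$, with $a=b=1$ isolated as the only failure) is exactly that routine verification.
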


Now we introduce some notation that will be used throughout the subsequent proofs.

Let $\mathbf{G}=\{G_1, \dots, G_{2n}\}$ be a collection of not necessarily distinct balanced bipartite graphs of order $2n$ with a common bipartition $V=(X,Y)$.
Let $G=\bigcup\limits_{i=1}^{2n} G_i$ be a simple bipartite graph with $V(G)=V$ and $E(G)=\bigcup\limits_{i=1}^{2n}E(G_i)$, and let $\widehat{G}$ be the quasi-complement of $G$. Then $E(\widehat{G})=E(K_{n,n})\setminus E(G)=\bigcap\limits_{i=1}^{2n} E(\widehat{G_i})$, and we write $e(\widehat{G})=m$.

For $2\le\ell\le n$, let $\mathcal{C}_{2\ell}(G)$ be the set of all $2\ell$-cycles in $G$ with $|\mathcal{C}_{2\ell}(G)|=N_{2\ell}$.
For an edge $e\in E(G)$, let $N_{e,2\ell}=|\{C\in\mathcal{C}_{2\ell}(G):e\in E(C)\}|$.

\begin{lemma}\label{lem2-1}
Suppose $N_{2\ell}>0$ and $m\le n-2$. Then $N_{e,2\ell}\le \frac{2N_{2\ell}}{n-m}$ for every $e\in E(G)$.
\end{lemma}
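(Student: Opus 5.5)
The plan is a switching (double-counting) argument. Fix an edge $e=xy\in E(G)$ with $x\in X$, $y\in Y$, and let $\mathcal{C}_e$ be the set of $2\ell$-cycles of $G$ through $e$, so $|\mathcal{C}_e|=N_{e,2\ell}$. The aim is a family of pairs $(C,\tau)$, with $C\in\mathcal{C}_e$ and $\tau$ a vertex transposition, such that two things hold: each $C$ occurs in at least $n-m$ pairs, and the map $(C,\tau)\mapsto \tau(C)$ is at most $2$-to-$1$ into $\mathcal{C}_{2\ell}(G)$. Then $(n-m)N_{e,2\ell}\le 2N_{2\ell}$, which is the claim. The hypothesis $m\le n-2$ makes $n-m\ge 2$, so the bound is meaningful. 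It should also supply the slack needed in the counting.

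\textbf{Step 1 (the switch).} For $C\in\mathcal{C}_e$ and $x'\in X\setminus\{x\}$, let $\tau_{x'}$ swap $x$ and $x'$ and fix every other vertex. Then $\tau_{x'}(C)$ is a $2\ell$-cycle of $K_{n,n}$ containing $x'y$. Let $y_2$ be the other $C$-neighbour of $x$. The switch is \emph{good} if $\tau_{x'}(C)\subseteq G$. This needs $x'y,x'y_2\in E(G)$, and, when $x'\in V(C)$, also that $x$ is adjacent in $G$ to both $C$-neighbours of $x'$. Symmetrically, there are switches $\tau_{y'}$ for $y'\in Y\setminus\{y\}$, and $C$ itself can be counted once as the trivial switch. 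This gives $2n-1$ candidate images per $C$.

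\textbf{Step 2 (preimages).} Recover $(C,\tau)$ from $C'=\tau(C)$. The vertex $x'$ must be a $C'$-neighbour of $y$ (or $y'$ a $C'$-neighbour of $x$), and $\tau$ is an involution, so $C=\tau(C')$. Hence each $C'$ has a bounded number of preimages: at most two from each side, plus the trivial one. Combining this with the total of $2n-1$ candidates should give a ratio of $2$ per unit of $n-m$ after the sides are paired up. If the constants do not match, I will restrict to one side, say only $X$-switches together with the identity, which gives $n$ candidates. That may balance better, because $C'$ then contains $x'y$ with $x'$ one of only two $C'$-neighbours of $y$.

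\textbf{Step 3 (bad switches — the main obstacle).} I must show that at most $m$ of the candidate switches of a fixed $C$ are bad. A switch is bad only if some edge of $\widehat{G}$ is among the at most four edges that $\tau$ newly requires. So I will charge each bad switch to a missing edge $ab\in E(\widehat{G})$. The difficulty is that one missing edge can kill several switches. For example, a missing edge $xb$ can spoil the $X$-switches at both $C$-neighbours of $b$, and possibly also the $Y$-switch at $b$. A naive charge therefore gives roughly $2m$ bad switches and the weaker bound $2N_{2\ell}/(n-1-m)$. To get the exact constant, I would first try a case split on whether the missing edge meets $\{x,y\}$ and whether its endpoints lie on $C$. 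In the expensive cases I would then use the trivial switch and the restriction $m\le n-2$ to absorb the overcount, so that the total comes to $n-m$ good switches per $C$. If this is too lossy, the fallback is to use only the $X$-side with a sharper charging: a missing edge $ab$ with $a\neq x$ kills only $x'=a$, and only when $b\in\{y,y_2\}$. It remains to bound the damage from missing edges at $x$ separately.
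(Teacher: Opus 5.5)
There is a genuine gap in Step 3, and better bookkeeping cannot close it: with vertex transpositions, the per-cycle count you need is simply false. Write $C=x_0y_0x_1y_1\cdots x_{\ell-1}y_{\ell-1}x_0$ with $e=x_0y_0$. For $2\le j\le \ell-2$, the switch $\tau_{x_j}$ needs four new edges, $x_0y_{j-1},x_0y_j,x_jy_0,x_jy_{\ell-1}$. So one missing edge $x_0y_j$ kills both $\tau_{x_j}$ and $\tau_{x_{j+1}}$, and $x_0y_1$ kills $\tau_{x_1}$ and $\tau_{x_2}$.

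Concretely, take $n=\ell=7$ and $E(\widehat{G})=\{x_0y_1,x_0y_3\}$, so $m=2$ and $C\subseteq G$. Then $\tau_{x_1},\dots,\tau_{x_4}$ are all bad, so $C$ lies in only $3$ pairs of your one-sided scheme, whereas you need $n-m=5$. A sharper accounting does not save it either. Suppose you count the identity injectively into cycles through $e$, and count only $\tau_{x_2},\dots,\tau_{x_6}$ (whose images avoid $e$) $2$-to-$1$. You would then need $n-2-m=3$ good switches, and you have $2$.

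Adding the $Y$-side does not help. A missing $x_0y_j$ also kills $\tau_{y_j}$. Moreover, the two-sided map is up to $4$-to-$1$ on cycles avoiding $e$ (two $C'$-neighbours of $y$, two of $x$), so the ``pairing of the sides'' in Step 2 has no support. Your fallback stops exactly where it fails (``bound the damage from missing edges at $x$ separately''): within this family, that damage really is two switches per missing edge.

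The missing idea is to use $n-2$ switches whose sets of newly required edges are pairwise disjoint, so that each edge of $\widehat{G}$ invalidates at most one of them. The paper keeps your transposition only for vertices off the cycle: it replaces $y_0$ by $z\in Y\setminus V(C)$, which needs $x_0z$ and $zx_1$. This gives $n-\ell$ moves.

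For vertices on the cycle, it uses 2-opt moves instead of transpositions: for $j\in[\ell-2]$, delete $x_0y_0$ and $y_jx_{j+1}$, and add $x_0y_j$ and $y_0x_{j+1}$. This gives $\ell-2$ moves; in the example above, $j=2,4,5$ survive. The $2(n-2)$ new edges are pairwise distinct, and every image avoids $e$. Each cycle $C'$ avoiding $e$ has at most two preimages: which of its two $x_0$--$y_0$ paths is reversed back, or which $C'$-neighbour of $x_0$ is swapped back to $y_0$.

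Since the images avoid $e$, the identity counts separately, giving $N_{2\ell}\ge N_{e,2\ell}+\tfrac{(n-2-m)N_{e,2\ell}}{2}=\tfrac{(n-m)N_{e,2\ell}}{2}$.
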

\begin{proof}
Fix $e=x_0y_0\in E(G)$, where $x_0\in X$ and $y_0\in Y$. The number of $2\ell$-cycles not containing $e$ in $G$ is $N_{2\ell}-N_{e,2\ell}$. In the following, we give a lower bound for $N_{2\ell}-N_{e,2\ell}$ in terms of $N_{e,2\ell}$.

Firstly, for any $C\in \mathcal{C}_{2\ell}(G)$ containing $e$ with
$C=x_0y_0x_1y_1\cdots x_{\ell-1}y_{\ell-1}x_0$, we define two types of cycle-switching operations as follows.

\vspace{6pt}
\noindent\textbf{Operation I:} The vertex set $V(C)$ remains unchanged. For each $j\in[\ell-2]$, delete $x_0y_0$, $y_jx_{j+1}$, and add $x_0y_j$, $y_0x_{j+1}$.
\vspace{6pt}

\vspace{6pt}
\noindent\textbf{Operation II:} Replace $y_0$ with a vertex in $Y\setminus V(C)$. For each $z\in Y\setminus V(C)$, delete $x_0y_0$, $y_0x_1$, and add $x_0z$, $zx_1$.
\vspace{6pt}

Secondly, we prove that for any $2\ell$-cycle $C\in\mathcal{C}_{2\ell}(G)$ with $e\in E(C)$, there exist at least $n-2-m$ cycle-switching operations such that each operation yields a $2\ell$-cycle in $G$ that does not contain the edge $e$.

For Operation I, if the two added new edges $x_0y_j,y_0x_{j+1}\in E(G)$, then the result is the $2\ell$-cycle $x_0y_jx_{j}y_{j-1}\cdots x_1y_0x_{j+1}y_{j+1}\cdots x_{\ell-1}y_{\ell-1}x_0\in \mathcal{C}_{2\ell}(G)$, which does not contain $e$. There are $\ell-2$ possible cycle-switching operations since $j\in[\ell-2]$.

For Operation II, if the two added new edges $x_0z,zx_1\in E(G)$, the result is the $2\ell$-cycle $x_0zx_1y_1\cdots x_{\ell-1}y_{\ell-1}x_0\in \mathcal{C}_{2\ell}(G)$ avoiding $e$. There are $n-\ell$ possible cycle-switching operations since $z\in Y\setminus V(C)$.

Thus there exist $(\ell-2)+(n-\ell)=n-2$ possible cycle-switching operations. Note that all edges added by these operations are pairwise distinct, which implies each edge in $E(\widehat{G})$ can invalidate at most one cycle-switching operation. It follows that $C$ admits at least $n-2-m$ cycle-switching operations satisfying the requirement.

Thirdly, we prove that any $2\ell$-cycle $C'\in\mathcal{C}_{2\ell}(G)$ with $e\notin E(C')$ can be obtained from at most two $2\ell$-cycles containing $e$ by cycle-switching operations I and II.

If $y_0\in V(C')$, then either $C'$ is obtained by Operation I, or it cannot be obtained by any of the cycle-switching operations I or II. Assume $C'$ arises from Operation I. There are two paths between $x_0$ and $y_0$ in $C'$, denoted by $P=x_0v_1u_1\cdots v_ru_ry_0$ and $Q=x_0v'_1u'_1\cdots v'_su'_sy_0$. Then $C'$ can be obtained via Operation I from $C''=C'-\{x_0v_1,y_0u'_s\}+\{x_0y_0,v_1u'_s\}$ or $C'''=C'-\{x_0v'_1,y_0u_r\}+\{x_0y_0,v'_1u_r\}$.

If $y_0\notin V(C')$, then either $C'$ is obtained by Operation II, or it cannot be obtained by any of the cycle-switching operations I or II. Assume $C'$ arises from Operation II, and $N_{C'}(x_0)=\{v_1,v_2\}$. Then $C'$ can be obtained via Operation II from the cycle constructed by replacing $v_1$ or $v_2$ with $y_0$.

Finally, combining the above arguments, $G$ contains at least $\frac{(n-2-m)N_{e,2\ell}}{2}$ $2\ell$-cycles not containing $e$. It follows that $N_{2\ell}-N_{e,2\ell}\ge \frac{(n-2-m)N_{e,2\ell}}{2}$, and thus $N_{e,2\ell}\le \frac{2N_{2\ell}}{n-m}$.

This completes the proof.
\end{proof}

\begin{lemma}\label{lem2-2}
Let $e(G_i)\ge n^2-n+1$ for each $i\in [2n]$. Then
\begin{enumerate}[label=(\roman*), font=\upshape, itemsep=2pt, align=left, leftmargin=1em]
 \item $m\le n-1$;
 \item for each $2\le\ell\le n$, $G$ contains a $2\ell$-cycle if and only if $\mathbf{G}$ contains a rainbow $2\ell$-cycle;
 \item $G$ is bipancyclic if and only if $\mathbf{G}$ is rainbow bipancyclic.
\end{enumerate}
\end{lemma}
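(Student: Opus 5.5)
Part (i) is immediate. Since $E(\widehat{G})=\bigcap_i E(\widehat{G_i})\subseteq E(\widehat{G_1})$, we get $m\le e(\widehat{G_1})=n^2-e(G_1)\le n-1$. Part (iii) follows from (ii) by applying it to every $\ell\in\{2,\dots,n\}$. So the real work is (ii). One direction is trivial: a rainbow $2\ell$-cycle uses edges of $G$, so it is a $2\ell$-cycle in $G$. For the converse, assume $G$ contains a $2\ell$-cycle. We must produce a $2\ell$-cycle $C$ in $G$ together with an injection $\phi$ assigning each edge of $C$ to a distinct $G_i$ that contains it. By Hall's theorem (Theorem \ref{thm2-1}) applied to the auxiliary bipartite graph between $E(C)$ and $[2n]$, it suffices to find $C$ such that for every $S\subseteq E(C)$ the number of indices $i$ with $E(G_i)\cap S\neq\emptyset$ is at least $|S|$.

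For a fixed edge $e\in E(G)$, let $a_e$ be the number of $i$ with $e\notin E(G_i)$. Each $G_i$ misses at most $n-1$ edges of $K_{n,n}$, and $m$ of those are already missing from $G$, so $\sum_{e\in E(G)} a_e\le 2n(n-1-m)$. For a nonempty $S\subseteq E(C)$, Hall fails only if at least $2n-|S|+1\ge 2n-2\ell+1\ge 1$ graphs miss all of $S$. In particular, for $|S|=1$ we would need a single edge $e$ with $a_e=2n$, i.e.\ an edge of $G$ missed by every $G_i$, which is impossible. So the worrying sets are larger. Some index $i$ misses all of $S$ only if $G_i$ misses $|S|$ edges, so $|S|\le n-1-m$. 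For a given $S$, the number of such $i$ is at most $\sum_i e(\widehat{G_i})/|S|$ minus overlaps. The cleanest route is to count: call $C$ \emph{bad} if Hall fails. Then $C$ contains an edge $e$ missing from at least $2n-2\ell+1$ graphs; more precisely, the sum of $a_e$ over $e\in S$ is at least $|S|(2n-|S|+1)$.

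The counting is where Lemma \ref{lem2-1} enters. Assume first $m\le n-2$. Average over all $2\ell$-cycles: the expected value of $\sum_{e\in C}a_e$ is at most
\[
\frac{1}{N_{2\ell}}\sum_e a_e N_{e,2\ell}\le \frac{2}{n-m}\cdot 2n(n-1-m)<4n,
\]
so some cycle $C$ has total weight $\sum_{e\in C}a_e<4n$. I then need to check that small weight forces Hall's condition. A violating $S$ with $|S|=s\ge2$ needs $2n-s+1$ graphs each missing all $s$ edges, giving weight at least $s(2n-s+1)$. For $2\le s\le n-1-m$ this is at least $2(2n-1)$, which is roughly $4n$. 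This is the main obstacle: the constants are borderline, so I would sharpen the count. Instead of total weight, I would pick $C$ minimizing the number of bad pairs $\{e,f\}\subseteq E(C)$ that are simultaneously missing from many $G_i$. Alternatively, I would use the sharper bound $\sum_e a_e\le 2n(n-1-m)$ together with the fact that $s\ge2$ missing edges per graph leaves fewer such graphs, namely at most $2n(n-1-m)/s$. Whichever variant works, the plan is to use Lemma \ref{lem2-1} to show that the expected number of obstructions is below one.

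Finally, the case $m=n-1$ must be handled directly. Here every $G_i$ equals $G$ with $\widehat{G_i}=\widehat{G}$, so $G_1=\dots=G_{2n}=G$. Then any $2\ell$-cycle of $G$ is trivially rainbow, since we can assign its $2\ell\le 2n$ edges to distinct copies.
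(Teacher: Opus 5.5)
You set this up exactly as the paper does: the same weights $a_e=\omega(e)$, the averaging via Lemma~\ref{lem2-1}, the Hall-violator lower bound $|S|(2n-|S|+1)$, and a separate treatment of $m=n-1$. However, your proof of (ii) is not finished. You bound the average cycle weight only by ``$<4n$'' and note that a Hall violator forces weight at least $4n-2$. You then call the constants borderline and defer to unspecified alternatives (minimizing bad pairs, ``expected number of obstructions below one'', ``whichever variant works''), none of which you carry out. This is a genuine gap, because a cycle of weight $<4n$ is not excluded from having weight $\ge 4n-2$.

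The missing step is a one-line computation with the quantity you already derived:
\[
\frac{2}{n-m}\cdot 2n(n-1-m)=4n-\frac{4n}{n-m}\le 4n-4,
\]
since $0<n-m\le n$. Hence some $2\ell$-cycle $C$ has $\sum_{e\in E(C)}a_e\le 4n-4$. On the other hand, suppose Hall's condition fails for $C$ with a set $S$ of size $s=|S|\ge 2$. Then the weight of $C$ is at least
\[
s(2n-s+1)=4n-2+(s-2)(2n-s-1)\ge 4n-2.
\]
The last inequality uses $s\le n-1-m\le n-1$, which you correctly derived from the existence of an index missing all of $S$. This contradicts the bound $4n-4$, so Hall's condition holds for $C$ and $C$ is rainbow. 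With this line inserted, your argument coincides with the paper's, and none of the proposed sharpenings are needed.
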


\begin{proof}
Since $e(G_i)\ge n^2-n+1$ for each $i\in [2n]$, we have $e(\widehat{G_i})=e(K_{n,n})-e(G_i)\le n-1$. Recalling that $E(\widehat{G})=E(K_{n,n})\setminus E(G)=\bigcap\limits_{i=1}^{2n} E(\widehat{G_i})$ with $e(\widehat{G})=m$, we have
$E(\widehat{G})\subseteq E(\widehat{G_i})$, and thus $m\le n-1$, (i) holds.

Now we show that (ii) holds. The sufficiency is trivial, so it suffices to prove the necessity. Fix $\ell\in\{2,\dots,n\}$ and suppose $G$ contains a $2\ell$-cycle.

\vspace{6pt}
$\mathbf{Case~1.}$ $m=n-1$.
\vspace{6pt}

Then $E(\widehat{G_i})=E(\widehat{G})$ for each $i\in [2n]$, and thus $G_1=\cdots=G_{2n}=G$. Therefore, every cycle in $G$ is rainbow in $\mathbf{G}$, and thus (ii) holds.

\vspace{6pt}
$\mathbf{Case~2.}$ $m\le n-2$.
\vspace{6pt}

Firstly, we define $\omega(e)=|\{i\in[2n]:e\notin E(G_i)\}|$ for each $e\in E(G)$ and $\omega(C)=\sum\limits_{e\in E(C)}\omega(e)$ for any $C\in\mathcal{C}_{2\ell}(G)$.
Then
\begin{equation}\label{e1}
\begin{aligned}
\sum_{e\in E(G)}\omega(e)
&=\sum_{i=1}^{2n}|E(G)\setminus E(G_i)|\\
&=\sum_{i=1}^{2n}|E(K_{n,n})\setminus(E(\widehat{G})\cup E(G_i))|\\
&=\sum_{i=1}^{2n}|E(\widehat{G_i})\setminus E(\widehat{G})|\\
&\le 2n(n-1-m),
\end{aligned}
\end{equation}
and
\begin{equation*}
\sum_{C\in\mathcal{C}_{2\ell}(G)}\omega(C)=\sum_{C\in\mathcal{C}_{2\ell}(G)}\sum_{e\in E(C)}\omega(e)=\sum_{e\in E(G)}\omega(e)|\{C\in\mathcal{C}_{2\ell}(G):e\in E(C)\}|=\sum_{e\in E(G)}\omega(e)N_{e,2\ell}.
\end{equation*}

By \eqref{e1}, Lemma \ref{lem2-1} and the fact that $\frac{4n(n-1-m)}{n-m}=4n-\frac{4n}{n-m}\le 4n-4$, we have
\begin{equation}\label{2}
   \sum_{C\in\mathcal{C}_{2\ell}(G)}\omega(C)\le \frac{2N_{2\ell}}{n-m}\sum_{e\in E(G)}\omega(e)\le \frac{4n(n-1-m)}{n-m}N_{2\ell}\le(4n-4)N_{2\ell}.
\end{equation}
Since $|\mathcal{C}_{2\ell}(G)|=N_{2\ell}$, \eqref{2} implies that there is a cycle $C'\in\mathcal{C}_{2\ell}(G)$ such that
\begin{equation}\label{e2}
\omega(C')\le 4n-4<4n-2.
\end{equation}

Next, we construct a bipartite graph $B$ with bipartition $(E(C'),[2n])$ such that $V(B)=E(C')\cup [2n]$ and $fi\in E(B)$ if and only if $f\in E(G_i)$, where $f\in E(C')$ and $i\in[2n]$. It immediately follows from $E(C')\subseteq E(G)$ that $d_B(f)\ge 1$ for any $f\in E(C')$.
Clearly, a matching in $B$ which covers all vertices in $E(C')$ corresponds to a rainbow coloring of $C'$.

If there is no such matching in $B$, by Theorem \ref{thm2-1}, there exists a nonempty set $S\subseteq E(C')$ such that $|N_B(S)|\le |S|-1$. It follows that $|S|\le 2\ell\le 2n$, and $|S|\geq2$ since $d_B(f)\ge 1$ for any $f\in E(C')$ implies $|N_B(S)|\ge 1$.
Let $Q=[2n]\setminus N_B(S)$. Then $|Q|\ge 2n-|S|+1\ge1$.

On the other hand, for each $f\in S\subseteq E(C')$ and each $i\in Q$, we have $fi\notin E(B)$, say, $f\in E(\widehat{G_i})$, thus $S\subseteq E(\widehat{G_i})$ for each $i\in Q$, which implies $\omega(C')\ge |S||Q|$, and $|S|\le n-1$ since $e(\widehat{G_i})\le n-1$ for each $i\in Q$.
Therefore,
\[\omega(C')\ge |S||Q|\ge |S|(2n-|S|+1)=4n-2+(|S|-2)(2n-|S|-1)\ge 4n-2,\]
contradicting \eqref{e2}. Thus the bipartite graph $B$ has a matching which covers all vertices in $E(C')$, which implies $C'\in \mathcal{C}_{2\ell}(G)$ is rainbow. It follows that $\mathbf{G}$ contains a rainbow $2\ell$-cycle, and thus (ii) holds.

Combining Case 1 and Case 2, the proof of (ii) is completed.

Clearly, by (ii), (iii) holds.

This completes the proof of Lemma \ref{lem2-2}.
\end{proof}

We now prove Theorem \ref{thm1-7}.

\begin{proof}[\noindent\textbf{Proof of Theorem \ref{thm1-7}}]
Since $G=\bigcup\limits_{i=1}^{2n} G_i$, $e(G)\ge e(G_i)\ge n^2-n+1$. By Theorem \ref{thm1-4}, $G$ is bipancyclic unless $G\cong B_{n}^1$.

If $G$ is bipancyclic, then $\mathbf{G}$ is rainbow bipancyclic by (iii) of Lemma \ref{lem2-2}.

If $G\cong B_{n}^1$, then $e(G)=n^2-n+1$. Since $G_i\subseteq G$ and $e(G_i)\ge n^2-n+1$, we have $G_i=G$ for each $i\in [2n]$, which implies that $G_1=G_2=\dots=G_{2n}\cong B_{n}^1$. Clearly, $G$ is not bipancyclic, and thus $\mathbf{G}$ is not rainbow bipancyclic.

Therefore, the proof of Theorem \ref{thm1-7} is completed.
\end{proof}

\section{Proof of Theorem \ref{thm1-8}}
In this section, we show Theorem \ref{thm1-8}. To this end, we first present some lemmas.

\begin{lemma}\label{lem3-1}
Let $n\ge 2$ and $G$ be a balanced bipartite graph of order $2n$ satisfying $\rho(G)\ge \sqrt{n(n-1)}$. Then either
$G\cong K_{n-1,n}\cup K_1$ or $e(G)\ge n^2-n+1$.
\end{lemma}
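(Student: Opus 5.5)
The plan is to combine Theorem \ref{thm1-5} with its equality characterization. Since $\rho(G)\ge\sqrt{n(n-1)}>0$, the graph $G$ has at least one edge. Theorem \ref{thm1-5} then gives
\[e(G)\ge \rho^2(G)\ge n(n-1).\]
If $e(G)\ge n^2-n+1$ we are done. So the only remaining case is $e(G)=n^2-n$, where all inequalities are tight. In particular $\rho^2(G)=e(G)$.

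In this case the equality clause of Theorem \ref{thm1-5} shows that $G$ is a disjoint union of a complete bipartite graph $K_{a,b}$ and isolated vertices. Here $ab=n^2-n$, with $a$ and $b$ counting vertices in the two partite sets; the complete bipartite component respects the bipartition $(X,Y)$, since $G$ is a subgraph of $K_{n,n}$ on $(X,Y)$. We also have $a,b\le n$.

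Next we pin down $(a,b)$. We have $ab=n(n-1)$ with $1\le a,b\le n$, and we may assume $a\le b$ by symmetry of $X$ and $Y$.
\begin{itemize}
\item If $b=n$, then $a=n-1$.
\item If $b\le n-1$, then $ab\le (n-1)^2<n(n-1)$, which is impossible.
\end{itemize}
Hence $\{a,b\}=\{n-1,n\}$, so exactly one vertex of $V$ lies outside the $K_{n-1,n}$ component, and that vertex is isolated. Therefore $G\cong K_{n-1,n}\cup K_1$.

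There is no real obstacle here. The only points needing care are two small ones. First, the equality case is correctly invoked, which requires $e(G)\ge1$ and follows from $\rho(G)>0$ for $n\ge2$. Second, the complete bipartite component has its sides inside $X$ and $Y$ respectively, which is automatic because each edge joins $X$ to $Y$. The short arithmetic step forcing $\{a,b\}=\{n-1,n\}$ is the only genuine content.
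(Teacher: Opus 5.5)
Your proposal is correct and follows essentially the same route as the paper: Theorem \ref{thm1-5} gives $e(G)\ge n^2-n$, and in the case $e(G)=n^2-n$ its equality clause forces $G\cong K_{a,b}\cup(2n-a-b)K_1$ with $ab=n^2-n$ and $a,b\le n$, hence $\{a,b\}=\{n-1,n\}$. You also spell out the small arithmetic step and the $e(G)\ge 1$ hypothesis, which the paper leaves implicit.
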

\begin{proof}
Since $\rho(G)\ge \sqrt{n(n-1)}$, by Theorem \ref{thm1-5}, $e(G)\ge n^2-n$, and thus $e(G)\ge n^2-n+1$ or $e(G)=n^2-n$. If $e(G)=n^2-n$, then $\rho^2(G)=e(G)$, it follows from Theorem \ref{thm1-5} that $G\cong K_{a,b}\cup (2n-a-b)K_1$ with $ab=n^2-n$ and $a,b\le n$. This implies $\min\{a,b\}=n-1$ and $\max\{a,b\}=n$, and thus $G\cong K_{n-1,n}\cup K_1$.
\end{proof}

Let $n\ge 2$ and $\mathbf{G}=\{G_1, \dots, G_{2n}\}$ be a collection of not necessarily distinct balanced bipartite graphs of order $2n$ with a common bipartition $V=(X,Y)$ such that $\rho(G_i)\ge \sqrt{n(n-1)}$ for each $i\in [2n]$, and let $G$, $\widehat{G}$, $m$, $\mathcal{C}_{2\ell}(G)$ and $N_{2\ell}$ be as defined in Section 2.

Since $\rho(G_i)\geq \sqrt{n(n-1)}$ for each $i\in [2n]$, by Lemma \ref{lem3-1}, we have $G_i\cong K_{n-1,n}\cup K_1$ or $e(G_i)\ge n^2-n+1$ for any $i\in[2n]$.
Now we define the sets $R$ and $T$ as follows:
\[
R=\{i\in [2n]: G_i\cong K_{n-1,n}\cup K_1\},\quad T=[2n]\setminus R.
\]
Let $|R|=r$ and $|T|=t$. Then $r+t=2n$.

Clearly, for each $i\in T$, $e(G_i)\ge n^2-n+1$ and $e(\widehat{G_i})\le n-1$. Recalling the definition of $B_v$, for each $i\in R$, there exists a vertex $v\in V$ such that $G_i=B_v$, for convenience, we write $v=v_i$. In particular, if $G_i=G_j$ for $i,j\in R$, we set $v_i=v_j$.

For each $e\in E(G)$, we define $\omega_T(e)=|\{i\in T:e\notin E(G_i)\}|$. For a subgraph $H$ of $G$, let $ \omega_T(H)=\sum\limits_{e\in E(H)}\omega_T(e)$.
Similar to the proof of Lemma \ref{lem2-2}, we have the following results.

\begin{lemma}\label{lem3-2}
Let $t>0$ and $N_{2\ell}>0$ for $2\le \ell\le n$. Then $m\le n-1$, and there exists a cycle $C\in\mathcal{C}_{2\ell}(G)$ such that $\omega_T(C)<2t$.
\end{lemma}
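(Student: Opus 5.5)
The plan is to copy the averaging argument from Case~2 of Lemma \ref{lem2-2}, using only the graphs indexed by $T$ and the weight $\omega_T$.

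\textbf{Step 1: $m\le n-1$.} Since $t>0$, pick any $i\in T$. Because $G_i\subseteq G$, we have $E(\widehat{G})\subseteq E(\widehat{G_i})$. Hence $m\le e(\widehat{G_i})\le n-1$.

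\textbf{Step 2: the case $m=n-1$.} Here $E(\widehat{G_i})=E(\widehat{G})$ for every $i\in T$. So $G_i=G$ for all $i\in T$, and $\omega_T(e)=0$ for every $e\in E(G)$. Since $N_{2\ell}>0$, any $C\in\mathcal{C}_{2\ell}(G)$ then satisfies $\omega_T(C)=0<2t$.

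\textbf{Step 3: the case $m\le n-2$.} First bound the total weight exactly as in \eqref{e1}, summing only over $T$:
\[\sum_{e\in E(G)}\omega_T(e)=\sum_{i\in T}|E(\widehat{G_i})\setminus E(\widehat{G})|\le t(n-1-m).\]
Next, double counting as before gives $\sum_{C\in\mathcal{C}_{2\ell}(G)}\omega_T(C)=\sum_{e}\omega_T(e)N_{e,2\ell}$. Lemma \ref{lem2-1} applies because $m\le n-2$ and $N_{2\ell}>0$, and it yields
\[\sum_{C}\omega_T(C)\le \frac{2N_{2\ell}}{n-m}\,t(n-1-m)=2t\Bigl(1-\frac{1}{n-m}\Bigr)N_{2\ell}<2tN_{2\ell}.\]
The strict inequality uses $t>0$. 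Therefore some $C\in\mathcal{C}_{2\ell}(G)$ has $\omega_T(C)<2t$.

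\textbf{Main point to check.} The only delicate step is making sure Lemma \ref{lem2-1} is legitimate here. That lemma depends only on $G$ and on $m=e(\widehat{G})$, where $G$ is the union over all $2n$ graphs. It therefore does not care that some $G_i$ with $i\in R$ have many non-edges. What matters is only the bound $m\le n-1$, or $m\le n-2$ in Step 3, and both follow from having at least one $i\in T$. Everything else is routine.
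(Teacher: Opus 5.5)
Your proposal is correct and follows the paper's proof essentially step for step. You get $m\le n-1$ from any $i\in T$, handle $m=n-1$ via $\omega_T\equiv 0$, and for $m\le n-2$ combine $\sum_{e}\omega_T(e)\le t(n-1-m)$ with Lemma~\ref{lem2-1} and averaging. The final strict inequality $2t\bigl(1-\frac{1}{n-m}\bigr)N_{2\ell}<2tN_{2\ell}$ also uses $N_{2\ell}>0$, not just $t>0$, but both are hypotheses, so nothing is missing.
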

\begin{proof}
Clearly, $T\neq\emptyset$ as $t>0$.
Recalling that $E(\widehat{G})=E(K_{n,n})\setminus E(G)=\bigcap\limits_{i=1}^{2n} E(\widehat{G_i})$ with $e(\widehat{G})=m$, we have $E(\widehat{G})\subseteq E(\widehat{G_j})$ for any $j\in T$, and thus $m\le e(\widehat{G_j})\le n-1$.

By the definition of $\omega_T(e)$ and the fact that $|T|=t$, we have
\begin{equation}\label{e3}
\sum_{e\in E(G)}\omega_T(e)=\sum_{j\in T}|E(\widehat{G_j})\setminus E(\widehat{G})|\le t(n-1-m).
\end{equation}

If $m=n-1$, then $G_j=G$ for each $j\in T$, and thus $\omega_T(e)=0$ for any $e\in E(G)$. Therefore, $\omega_T(C)=0<2t$ for any cycle $C$ in $G$, and thus the conclusion follows.

If $m\le n-2$, by Lemma \ref{lem2-1} and \eqref{e3}, we have
\begin{align*}
\sum_{C\in\mathcal{C}_{2\ell}(G)}\omega_T(C)
&=\sum_{C\in\mathcal{C}_{2\ell}(G)}\sum_{e\in E(C)}\omega_T(e)
=\sum_{e\in E(G)}\omega_T(e)|\{C\in\mathcal{C}_{2\ell}(G): e\in E(C)\}|\\
&=\sum_{e\in E(G)}\omega_T(e)N_{e,2\ell}
\le\frac{2N_{2\ell}}{n-m}\sum_{e\in E(G)}\omega_T(e)
\le\frac{2t(n-1-m)}{n-m}N_{2\ell}\\
&<2tN_{2\ell}.
\end{align*}
It follows that there exists a cycle $C\in\mathcal{C}_{2\ell}(G)$ such that $\omega_T(C)<2t$.

This completes the proof.
\end{proof}

\begin{lemma}\label{lem3-3}
If $N_{2\ell}>0$ for $2\le\ell\le n$, then one of the following holds:
\begin{enumerate}[label=(\roman*), font=\upshape, itemsep=2pt, align=left, leftmargin=1em]
 \item $\mathbf{G}$ contains a rainbow $2\ell$-cycle;
 \item $\ell=n$ and there exists a vertex $v\in V$ such that $|\{i\in[2n]:G_i=B_v\}|=2n-1$.
\end{enumerate}
\end{lemma}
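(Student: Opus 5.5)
Fix $\ell$ with $N_{2\ell}>0$. We treat $t=0$ and $t>0$ separately, and in the main case run the Hall-type argument of Lemma \ref{lem2-2}, now with the graphs in $R$ acting as extra obstructions.

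\textbf{Case $t=0$.} Every $G_i$ is some $B_{v_i}$. If all $v_i$ are equal to a single $v$, then (ii) holds when $\ell=n$; it would also hold for $\ell<n$ except that this is not the situation claimed. In fact, for $\ell<n$ the graph $K_{n-1,n}$ contains $2\ell$-cycles, so $2n$ copies of $B_v$ already give a rainbow $2\ell$-cycle inside $B_v$. For $\ell=n$, all $2n$ graphs equal $B_v$ and there is no Hamiltonian cycle at all; but then $G=B_v$ and $N_{2n}=0$, which is excluded. Now suppose at least two distinct vertices $v_i$ occur. We build a rainbow $2\ell$-cycle directly, choosing a cycle that avoids a vertex $v$ with many copies and assigning its edges to those copies. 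If some $v$ has at least $2\ell$ copies and $\ell<n$, we are done inside $V\setminus\{v\}$. In the remaining subcase we take a Hamiltonian-type cycle and apply Hall's theorem (Theorem \ref{thm2-1}) to the bipartite graph between $E(C)$ and $[2n]$. An edge $f=xy$ is missing only from the graphs $B_x$ and $B_y$, so a deficient set $S$ must be the set of edges at one vertex $v$. This forces $|\{i:G_i=B_v\}|\ge 2n-1$, and a cycle through $v$ is only needed when $\ell=n$; this is exactly (ii).

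\textbf{Case $t>0$.} By Lemma \ref{lem3-2} pick $C\in\mathcal{C}_{2\ell}(G)$ with $\omega_T(C)<2t$, and form the auxiliary bipartite graph $B$ on $(E(C),[2n])$ as in Lemma \ref{lem2-2}. Suppose Hall's condition fails. Then there is $S\subseteq E(C)$ with $|S|\ge 2$ and $|N_B(S)|\le|S|-1$. Put $Q=[2n]\setminus N_B(S)$, so $|Q|\ge 2n-|S|+1$, and split $Q=Q_R\cup Q_T$. Each $i\in Q_T$ has $S\subseteq E(\widehat{G_i})$, so $|S|\le n-1$ whenever $Q_T\ne\emptyset$ and $\omega_T(C)\ge |S||Q_T|$. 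Each $i\in Q_R$ has $S$ contained in the star at $v_i$. Since $C$ is a cycle, that star meets $E(C)$ in at most $2$ edges, so $Q_R\ne\emptyset$ forces $|S|=2$ and $S=\{v_iu,v_iw\}$.

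\begin{itemize}
\item If $|S|\ge3$, then $Q=Q_T$, and the computation of Lemma \ref{lem2-2} gives $\omega_T(C)\ge|S|(2n-|S|+1)\ge 4n-2\ge 2t$ when $t\le 2n$, contradicting $\omega_T(C)<2t$.
\item If $|S|=2$, then $|Q|\ge 2n-1$ and $\omega_T(C)\ge 2|Q_T|$, so $2|Q_T|<2t$, i.e. $|Q_T|<t$. Hence $Q_T$ omits some index of $T$, and therefore $|Q_R|\ge 2n-1-(t-1)=r$. Thus every $i\in R$ lies in $Q_R$ with the same center $v$, and $|Q|\ge 2n-1$ forces $r+|Q_T|\ge 2n-1$, i.e. $|Q_T|\ge t-1$, so $|Q_T|=t-1$.
\end{itemize}

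\textbf{Main obstacle.} The hard part is this near-extremal configuration: all graphs in $R$ equal $B_v$, the two $C$-edges at $v$ lie outside $t-1$ of the $T$-graphs, and $\omega_T(C)=2t-2$. Here we must show that either $r=2n-1$ and $\ell=n$ (giving (ii)), or $C$ can be replaced by a cycle through fewer than two edges at $v$, or rerouted. If $\ell<n$, avoid $v$ entirely. Since $|V\setminus\{v\}|=2n-1$ and $G\supseteq K_{n-1,n}$ on it, there is a $2\ell$-cycle avoiding $v$, and all $R$-graphs contain it. We then redo the weight argument of Lemma \ref{lem3-2} restricted to cycles avoiding $v$, counted via switching Operations I/II inside $V\setminus\{v\}$. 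If $\ell=n$ and $r\le 2n-2$, then $t\ge 2$. Here $Q_T$ consists of $t-1$ graphs each missing the two $C$-edges at $v$, so their complements contain these two specific edges, while the one remaining $T$-graph contains both. We swap the neighbours of $v$ using the Operation I/II switchings and use the bound $e(\widehat{G_i})\le n-1$ to find a Hamiltonian cycle whose two $v$-edges can be assigned to two different $T$-graphs. This final case analysis is where I expect most of the work.
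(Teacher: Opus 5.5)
\textbf{There is a genuine gap: the hardest case is left as a plan.} Your reduction for $t>0$ is sound. It is a tidy repackaging of the paper's Claims~1 and~2: the paper's index $k$ is your unique element of $T\setminus Q_T$, and $G_k$ contains both edges of $S$ because every edge of $C$ lies in some $G_i$. Your $t=0$ case is essentially the paper's Subcase~2.2.

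The proposal stops exactly where the work is. In the configuration where every $R$-graph equals $B_v$, $S=\{vu,vw\}$ and $2\le t\le 2n-1$, you describe a plan (``swap the neighbours of $v$ \dots'') but never produce a rainbow $2\ell$-cycle. The missing idea is short:
\begin{enumerate}
\item Take $p\in T\setminus\{k\}$. Then $p\in Q$, so $vu,vw\notin E(G_p)$. Since $e(G_p)\ge n^2-n+1$, $v$ is not isolated in $G_p$, which also gives $n\ge 3$. Pick $z\in N_{G_p}(v)$.
\item Write $P=C-v=w_1w_2\cdots w_{2\ell-1}$ with $w_1=u$, $w_{2\ell-1}=w$. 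Reroute $P$ into a $u$--$z$ path $P'$ differing from $P$ in a single edge $e'$. If $z=w_a$, take $w_1\cdots w_{a-1}w_{2\ell-1}w_{2\ell-2}\cdots w_a$; if $z\notin V(P)$, replace $w$ by $z$. Since $e'$ avoids $v$, it lies in every $R$-graph.
\item Colour $vu$ by $k$, $vz$ by $p$, and $e'$ by some $q\in R$; this is where $r\ge 1$ is used.
\item Colour the remaining $2\ell-3$ edges from $[2n]\setminus\{k,p,q\}$. Because $\omega_T(vu)=\omega_T(vw)=t-1$ and $\omega_T(C)\le 2t-1$, you get $\omega_T(P)\le 1$. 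So the auxiliary graph is $K_{2\ell-3,2n-3}$ minus at most one edge, and Observation~\ref{ob2-1} gives the matching.
\end{enumerate}
This handles $\ell<n$ and $\ell=n$ at once. Your separate ``avoid $v$'' branch for $\ell<n$, with an unspecified re-run of the switching count, is not needed. Also, you asserted $\omega_T(C)=2t-2$, but only $\omega_T(C)\in\{2t-2,2t-1\}$ follows. That is exactly why the ``at most one edge deleted'' form of Observation~\ref{ob2-1} is required.

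\textbf{A second, smaller gap: you never separate $t=2n$.} Then $R=\emptyset$, and Lemma~\ref{lem3-2} only gives $\omega_T(C)<4n$. For $|S|=2$ the bound $\omega_T(C)\ge 2(2n-1)=4n-2$ therefore yields no contradiction. Your inequality $4n-2\ge 2t$ is false at $t=2n$; for $|S|\ge 3$ the argument is rescued by $(|S|-2)(2n-|S|-1)\ge n$, but $|S|=2$ is not.

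In this situation none of your planned manoeuvres applies:
\begin{itemize}
\item the two edges of $S$ need not share a vertex;
\item $G$ need not contain $K_{n-1,n}$ on $V\setminus\{v\}$;
\item there is no $R$-graph to absorb a rerouted edge.
\end{itemize}
The fix is to invoke Lemma~\ref{lem2-2}(ii) directly, as the paper does in Subcase~2.1. Its proof supplies a cycle with $\omega\le 4n-4<4n-2$, which is precisely the extra room you lack.
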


\begin{proof}
For any cycle $C$ in $G$, similar to the proof of Lemma \ref{lem2-2}, we construct a bipartite graph $B_{C}$ with bipartition $(E(C),[2n])$, where $e\in E(C)$ is adjacent to $i\in[2n]$ if and only if $e\in E(G_i)$. Since $E(C)\subseteq E(G)$, we have $d_{B_{C}}(e)\ge 1$ for any $e\in E(C)$.

\vspace{6pt}
$\mathbf{Case~1.}$ There exists some $C\in\mathcal{C}_{2\ell}(G)$ such that $B_{C}$ contains a matching covering all vertices in $E(C)$.
\vspace{6pt}

Since a matching in $B_{C}$ which covers all vertices in $E(C)$ corresponds to a rainbow coloring of $C$, it follows that $C$ is a rainbow $2\ell$-cycle in $\mathbf{G}$, and thus (i) holds.

\vspace{6pt}
$\mathbf{Case~2.}$ There is no such matching for any $C\in\mathcal{C}_{2\ell}(G)$.
\vspace{6pt}

For any $C\in\mathcal{C}_{2\ell}(G)$, there is a nonempty set $S\subseteq E(C)$ such that $|N_{B_{C}}(S)|\le |S|-1\le 2\ell-1\le 2n-1$ by Theorem \ref{thm2-1}. Since $|N_{B_{C}}(S)|>0$, we have $|S|\ge 2$. Let $Q=[2n]\setminus N_{B_{C}}(S)$. Then $|Q|\ge 2n-|S|+1\ge 1$.

Recall the sets $R$ and $T$ defined above, where $0\le |T|=t\le 2n$.
Now we consider the following three subcases according to the size of $t$.

\vspace{6pt}
$\mathbf{Subcase~2.1.}$ $t=2n$.
\vspace{6pt}

It is clear that $e(G_i)\ge n^2-n+1$ for each $i\in [2n]$. Since $N_{2\ell}>0$, $G$ contains a $2\ell$-cycle, and it follows from (ii) of Lemma \ref{lem2-2} that $\mathbf{G}$ contains a rainbow $2\ell$-cycle. Thus (i) holds.

\vspace{6pt}
$\mathbf{Subcase~2.2.}$ $t=0$.
\vspace{6pt}

It immediately follows that $r=2n$ and $G_i=B_{v_i}$ for some $v_i\in V$ and any $i\in[2n]$. Choose a cycle $C\in\mathcal{C}_{2\ell}(G)$, and let $S$ and $Q$ be defined as above.

We first show $|S|=2$. Assume that $|S|\ge 3$. For any $i\in[2n]$, we know $G_i=B_{v_i}(\cong K_{n-1,n}\cup K_1)$ for some $v_i\in V$. If $v_i\notin V(C)$, then $E(C)\subseteq E(G_i)$, and thus $|E(G_i)\cap S|=|S|\ge 3$ as $S\subseteq E(C)$. If $v_i\in V(C)$, then $|E(G_i)\cap S|\ge |S|-2\ge 1$. It follows that $|N_{B_{C}}(S)|=2n$, a contradiction. Thus $|S|\le 2$, and so $|S|=2$.

Since $|N_{B_{C}}(S)|>0$ and $|N_{B_{C}}(S)|\le |S|-1=1$, we have $|N_{B_{C}}(S)|=1$, and thus $|Q|=2n-1$, which implies there are $2n-1$ graphs in $\mathbf{G}$ missing the two edges of $S$ and $G_k$ contains the two edges of $S$, where $\{k\}=N_{B_{C}}(S)$. Since $G_i=B_{v_i}$ for some $v_i\in V$ and any $i\in[2n]$, we can obtain that there exists a vertex $v\in V$ such that $|\{i\in[2n]:G_i=B_v\}|=2n-1$. Without loss of generality, we take $G_1=G_2=\dots=G_{2n-1}=B_v$.

If $\ell=n$, then (ii) holds. If $2\le\ell\le n-1$, then there exists a rainbow $2\ell$-cycle in $\{G_1, \dots, G_{2n-1}\}$, and thus (i) holds.

\vspace{6pt}
$\mathbf{Subcase~2.3.}$ $1\le t\le 2n-1$.
\vspace{6pt}

Since $r+t=2n$, we have $1\le r\le 2n-1$.
By Lemma \ref{lem3-2}, there exists a cycle $C\in\mathcal{C}_{2\ell}(G)$ such that $\omega_T(C)<2t$. Let $S$ and $Q$ be defined as above. We can obtain the following claims by the property of $C$.

\begin{claimnum}\label{claim1}
$Q\cap R\neq\emptyset$.
\end{claimnum}

\begin{proof}
If not, then $Q\subseteq T$. Thus $|Q|\le t$, and $e(\widehat{G_i})\le n-1$ for each $i\in Q$ by the definition of $T$. Since $N_{B_{C}}(S) \cap Q=\emptyset$, for each $e\in S$ and each $i\in Q$, we have $ei\notin E(B_{C})$, which implies $e\in E(\widehat{G_i})$, and then $e(\widehat{G_i})\ge |S|$ for each $i\in Q$. It follows that $2\le |S|\le n-1$, which implies $n\ge 3$ and $2n-|S|-1\ge 3$.

By the definition of $\omega_T(C)$ and the fact that $Q\subseteq T$, we have
\[
\begin{aligned}
\omega_T(C)&=\sum\limits_{e\in E(C)}|\{i\in T:e\notin E(G_i)\}|\ge |S||Q|\ge |S|(2n-|S|+1)\\&=4n-2+(|S|-2)(2n-|S|-1)\ge 4n-2\ge 2t.
\end{aligned}
\]
This contradicts $\omega_T(C)<2t$.
\end{proof}

By Claim \ref{claim1}, we take $j\in Q\cap R$. Then $G_j=B_{v_j}$ for some $v_j\in V$, and thus $|E(C)\setminus E(G_j)|=\begin{cases}
0, & \text{if } v_j\notin V(C);\\
2, & \text{if } v_j\in V(C).
\end{cases}$ Combining this with $j\in Q$, we have $S\subseteq E(C)\cap E(\widehat{G_j})$, then $|S|=2$ as $|S|\ge2$, and thus $|N_{B_{C}}(S)|=1$.
Without loss of generality, we take $S=\{v_ju,v_jw\}$ and $N_{B_{C}}(S)=\{k\}$, where $j\neq k$. Then $S\subseteq E(G_k)$.

\begin{claimnum}\label{claim2}
For each $i\in R$, $G_i=G_j=B_{v_j}$.
\end{claimnum}

\begin{proof}
Assume for contradiction that there exists some $i\in R$ such that $G_{i}=B_{v_{i}}$, where $v_{i}\neq v_j$.

Now we consider $v_i$ and show that $v_{i}\notin\{u,w\}$. Otherwise, since $G_i-v_i\cong K_{n-1,n}$, we have $v_jw\in E(G_{i})$ if $v_{i}=u$, and $v_ju\in E(G_{i})$ if $v_{i}=w$. It follows that $i\in N_{B_{C}}(S)=\{k\}$, and thus $i=k$, which contradicts $S\subseteq E(G_k)$.

Since $v_{i}\notin\{u,w,v_j\}$ and $G_i-v_i\cong K_{n-1,n}$, we have $\{v_ju,v_jw\}\subseteq E(G_{i})$, and thus $i\in N_{B_{C}}(S)=\{k\}$, which implies $i=k$. It follows that $T\subseteq Q$. Therefore, $\omega_T(C)=\sum\limits_{e\in E(C)}|\{i\in T:e\notin E(G_i)\}|\ge |S||T|=2t$. This contradicts $\omega_T(C)<2t$.
\end{proof}

Since $S\cap E(G_j)=\emptyset$ and $S\subseteq E(G_k)$, by Claim \ref{claim2}, we have $k\in T$. Thus $\omega_T(v_ju)=\omega_T(v_jw)=t-1$. Let $P=C-v_j=w_1w_2\cdots w_{2\ell-1}$, where $w_1=u$ and $w_{2\ell-1}=w$. Then
\begin{equation}\label{e4}
\omega_T(P)=\omega_T(C)-\omega_T(v_ju)-\omega_T(v_jw)<2t-2(t-1)=2.
\end{equation}

$\mathbf{Subcase~2.3.1.}$ $t=1$.

In this subcase, $r=2n-1$. By Claim \ref{claim2}, $|\{i\in[2n]:G_i=B_{v_j}\}|=2n-1$. If $\ell=n$, then (ii) holds. If $2\le\ell\le n-1$, then there exists a rainbow $2\ell$-cycle in $\mathbf{G}\setminus\{G_k\}$, and thus (i) holds.

$\mathbf{Subcase~2.3.2.}$ $2\le t\le 2n-1$.

Let $p\in T\setminus\{k\}$. Then $p\in Q\cap T$ since $N_{B_{C}}(S)=\{k\}$, which implies $S\cap E(G_p)=\emptyset$ and $e(G_p)\ge n^2-n+1$, and thus $u,w\notin N_{G_p}(v_j)$ and $d_{G_p}(v_j)\ge 1$. Thus $n\ge 3$, and we take $z\in N_{G_p}(v_j)$. Then $u,w,z$ belong to the same partite set.

Now we show there is a path $P'$ of order $2\ell-1$ in $G$ from $w_1(=u)$ to $z$ such that $P'$ differs from $P$ in exactly one edge.
If $z=w_a\in V(P)$, then $a$ is odd and $3\le a\le 2\ell-3$, and thus  $P'=w_1w_2\cdots w_{a-1}w_{2\ell-1}w_{2\ell-2}\cdots w_a$ is the desired path since $w_{a-1}w_{2\ell-1}\in E(G_i)$ for each $i\in R$ by Claim \ref{claim2}.
If $z\notin V(P)$, then $P'=w_1w_2\cdots w_{2\ell-2}z$ is the desired path since $w_{2\ell-2}z\in E(G_i)$ for each $i\in R$ by Claim \ref{claim2}.

Let $C'=v_jw_1P'zv_j$. Then $C'\in\mathcal{C}_{2\ell}(G)$. In the following, we show that $C'$ is a rainbow $2\ell$-cycle. Let $e'$ be the unique edge in $E(P')\setminus E(P)$ and $q\in R$. Then $e'\in E(G_q)$ by Claim \ref{claim2}. Note that $v_jw_1\in E(G_k)$ and $v_jz\in E(G_p)$, where $k,p\in T$.
By the definition of $P',C'$, we have $E(C')\setminus\{v_jw_1,v_jz,e'\}=E(P)\cap E(P')$.

Now we construct a bipartite graph $B'$ with bipartition $(E(P)\cap E(P'),[2n]\setminus\{k,p,q\})$, where $e\in E(P)\cap E(P')$ is adjacent to $i\in[2n]\setminus\{k,p,q\}$ if and only if $e\in E(G_i)$. By Claim \ref{claim2}, we have $ei\in E(B')$ for any $e\in E(P)\cap E(P')$ and any $i\in R\cap ([2n]\setminus\{k,p,q\})$. Combining this with \eqref{e4}, it follows that $B'$ is a graph obtained from $K_{2\ell-3,2n-3}$ by deleting at most one edge, where $2n-3\ge 3$ as $n\ge 3$. By Observation \ref{ob2-1}, $B'$ has a matching covering all vertices in $E(P)\cap E(P')$. Therefore, $C'$ is a rainbow $2\ell$-cycle, and thus (i) holds.

Combining the above arguments, either (i) or (ii) holds, and thus the proof is completed.
\end{proof}

Now we give the proof of Theorem \ref{thm1-8}.
\begin{proof}[\noindent\textbf{Proof of Theorem \ref{thm1-8}}]
We first prove the sufficiency.
If (i) or (ii) holds, then $v$ is not contained in any rainbow cycle, and thus $\mathbf{G}$ contains no rainbow $2n$-cycle, which implies that $\mathbf{G}$ is not rainbow bipancyclic.

Next, we prove the necessity. Assume that $\mathbf{G}$ is not rainbow bipancyclic. Since $\rho(G_i)\ge \sqrt{n(n-1)}$ for each $i\in [2n]$, by Theorem \ref{thm1-5}, we have $e(G_i)\ge n^2-n$ for each $i\in [2n]$, and thus $e(G)\ge n^2-n$. We distinguish two cases according to $e(G)$.

\vspace{6pt}
$\mathbf{Case~1.}$ $e(G)=n^2-n$.
\vspace{6pt}

Since $G_i\subseteq G$ and $e(G_i)\ge n^2-n$ for each $i\in [2n]$, we have $G_1=G_2=\dots=G_{2n}=G$. By Lemma \ref{lem3-1}, $G=B_v\cong K_{n-1,n}\cup K_1$ for some $v\in V$, which implies $|\{i\in[2n]:G_i=B_v\}|=2n$. Thus both (i) and (ii) hold.

\vspace{6pt}
$\mathbf{Case~2.}$ $e(G)\ge n^2-n+1$.
\vspace{6pt}

By Theorem \ref{thm1-4}, $G$ is bipancyclic unless $G\cong B_{n}^1$.

If $G$ is bipancyclic, then $N_{2\ell}>0$ for each $2\le\ell\le n$, and
by Lemma \ref{lem3-3}, either $\mathbf{G}$ contains a rainbow $2\ell$-cycle for each $2\le\ell\le n$, or there exists a vertex $v\in V$ such that $|\{i\in[2n]:G_i=B_v\}|=2n-1$. Since $\mathbf{G}$ is not rainbow bipancyclic, (i) holds.

If $G\cong B_{n}^1$, then there exist distinct $u,v\in V$ such that $G_i\subseteq G=B_{v,u}$ for each $i\in [2n]$. For $n=2$, we have $G_i\in\{B_v,B_w,B_{v,u}\}$ for each $i\in [4]$, where $w\,(\neq u)$ is in the same partite set as $u$.
For $n\ge 3$, if $G_i\neq B_{v,u}$ for some $i\in [2n]$, then $e(G_i)<e(B_{v,u})=n^2-n+1$. Since $\rho(G_i)\ge \sqrt{n(n-1)}$, by Lemma \ref{lem3-1}, we have $G_i=B_{v_i}$ for some $v_i\in V$. It follows that $G_i=B_{v}$ since $G_i\subseteq B_{v,u}$ and $n\ge 3$. Therefore, $G_i\in\{B_v,B_{v,u}\}$ for $n\ge 3$ and each $i\in [2n]$. Thus (ii) holds.

Therefore, the proof of Theorem \ref{thm1-8} is completed.
\end{proof}

\section*{Funding}
This work is supported by the National Natural Science Foundation of China (Grant Nos. 12371347, 12271337).

\section*{Declarations}
\noindent\textbf{Conflict of interest} The authors declare that they have no known competing financial interests or personal relationships that could have appeared to influence the work reported in this paper.\\
\textbf{Data availability} No data was used for the research described in the article.

\end{spacing}
\end{document}